\theoremstyle{plain}
\newtheorem{theorem}{Theorem}[section]
\newtheorem{lemma}{Lemma}[section] 
\newtheorem{corollary}{Corollary}[section] 
\newtheorem{remark}{Remark}[section] 
\newtheorem{proposition}[theorem]{Proposition}
\begin{document}

\title {On smoothing properties of the Bergman projection}

\keywords{Smoothing properties \; Bergman projection \; Hyperconvexity index \; Weighted estimates}

\email{ptrongthuc@hcmut.edu.vn}
  
\author[Phung Trong Thuc]{Phung Trong Thuc}

\address
	{Ho Chi Minh City University of Technology, VNU-HCM, Vietnam}	
 \thanks{This work was supported by the VNU-HCM program, Vietnam.}	

\subjclass[2010]{Primary 32A25; Secondary 32A36.}

\begin{abstract}
We study smoothing properties of the Bergman projection and also of weighted Bergman projections. In particular, we relate these properties to the hyperconvexity index of a pseudoconvex domain in $\mathbb{C}^{n}$. The notion of  a hyperconvexity index was first introduced by B.Y. Chen, which provides a flexible criterion for studying geometric properties of hyperconvex domains. We also obtain a new estimate of weighted Bergman projections, which improves a well-known estimate of Berndtsson and Charpentier. We give several applications of this estimate, including the study of smoothing properties of weighted Bergman projections.
\end{abstract}

\maketitle

\section{Introduction}
Let $\Omega\,\subset\,\mathbb{C}^{n}$ be a bounded pseudoconvex domain, and let $P_{\Omega}$ be the Bergman projection of $\Omega$, that is the orthogonal projection of $L^{2}\left(\Omega\right)$ onto
$A^{2}\left(\Omega\right)$. Here, for $1\leq p\leq\infty$, $A^{p}\left(\Omega\right)=\mathcal{{O}}\left(\Omega\right)\cap L^{p}\left(\Omega\right)$
denotes the space of holomorphic, $p$-integrable functions (always with respect
to the Lebesgue measure in $\mathbb{C}^{n}$).  And, for a positive measurable function $\omega$ on $\Omega$, we
denote by $L^{2}\left(\Omega,\omega\right)$ the space of all measurable
functions $f:\Omega\rightarrow\mathbb{C}$ satisfying 
\[
\int_{\Omega}\left|f\right|^{2}\omega\,<\,\infty.
\]

Recently, it has been shown that the Bergman projection has nice smoothing properties
when acting on the space of conjugate holomorphic functions. Here, we call a function $f$ a conjugate holomorphic function if its complex conjugate, $\overline{f}$, is holomorphic.  In \cite{HMS14},
the authors proved that for any $k\in\mathbb{N}$ and under the assumption of \emph{Condition R},
$\left\Vert P_{\Omega}\left(f\right)\right\Vert _{H^{k}\left(\Omega\right)}\leq C_{k}\left\Vert f\right\Vert _{L^{2}\left(\Omega\right)}$,
for all conjugate holomorphic functions $f\in L^{2}\left(\Omega\right)$. Here $H^{k}\left(\Omega\right)$ is the standard $L^{2}$ Sobolev
space of order $k$. In fact it is proved in \cite[Theorem 1.10]{HMS14} that, for any
smoothly bounded domain $\Omega$, by assuming that $P_{\Omega}$
is bounded from $H^{k_{1}}\left(\Omega\right)$ to $H^{k_{2}}\left(\Omega\right)$,
for some $k_{1},k_{2}\in\mathbb{N}$, then for every $g\in C^{\infty}\left(\overline{\Omega}\right)$,
\begin{equation}\label{herg}
\left\Vert P_{\Omega}\left(fg\right)\right\Vert _{H^{k_{2}}\left(\Omega\right)}\leq\text{const}\left\Vert f\right\Vert _{L^{2}\left(\Omega\right)},
\end{equation}
for all conjugate holomorphic functions $f$ in $L^{2}\left(\Omega\right)$. Recently, Herbig \cite{Herb13} showed  under the same assumption that one can weaken the right
hand side of \eqref{herg} to only $\left\Vert f\right\Vert _{H^{-k}\left(\Omega\right)}$,
for any $f\in L^2\left(\Omega\right)$ and  every given $k\in\mathbb{N}$.

It would be interesting to relate similar smoothing properties of the Bergman projection to geometric behavior of the domain, and also
to study Sobolev-norm estimates $\left\Vert \cdot\right\Vert _{H^{s}\left(\Omega\right)}$
, when the exponent $s$ is small (less than $1$). 
It is known that (see \cite[Theorem 2.4]{BeCh00}) for any pseudoconvex
domain $\Omega$ with Lipschitz boundary, $P_{\Omega}:H^{s}\left(\Omega\right)\rightarrow H^{s}\left(\Omega\right)$
is bounded, for any $0<s<\left.\eta\left(\Omega\right)\right/2$, where $\eta\left(\Omega\right)$ is the Diederich-Forn{\ae}ss index of $\Omega$,
defined by
\begin{align*}
\eta\left(\Omega\right) & =\sup\Bigl\{\eta>0:\text{there exist }\rho\in PSH^{-}\left(\Omega\right)\text{ and }C>0\\
 &\hspace*{2.4cm} \text{ such that  }\frac{1}{C}\delta_{\Omega}^{\eta}\leq-\rho\leq C\delta_{\Omega}^{\eta}\;\text{ on }\Omega\Bigr\}.
\end{align*}
Here we denote by $\delta_{\Omega}$ the boundary distance function,
and by $PSH^{-}\left(\Omega\right)$ the set of all negative plurisubharmonic
functions on $\Omega$. The Diederich-Forn{\ae}ss index is always positive for pseudoconvex
domains with Lipschitz boundary, see \cite{Har08}. It can be arbitrarily
close to $0$, as the smoothly bounded worm domain shown, see \cite{DiFo77}. On the other side, for any number $\mu>0$, Barrett \cite{Bar92} proved
that $P_{\Omega_{\mu}}:H^{s}\left(\Omega_{\mu}\right)\not\rightarrow H^{s}\left(\Omega_{\mu}\right)$,
for any $s\geq\left.\pi\right/2\mu$. Here $\Omega_{\mu}$ is the smooth worm domain with parameter $\mu$, see \cite[Definition 1.2]{KrPe08}.

In connection with the Diederich-Forn{\ae}ss index, in \cite[Theorem 1.4]{ChZe17},
the authors showed that for any smoothly bounded pseudoconvex domain $\Omega$ in $\mathbb{C}^{n}$ and any number $s<\left.\eta\left(\Omega\right)\right/\left(4n\right)$,
\[
\left\Vert P_{\Omega}\left(f\right)\right\Vert _{H^{s}\left(\Omega\right)}\leq\text{const}\left\Vert f\right\Vert _{L^{2}\left(\Omega\right)},
\]
for all conjugate holomorphic functions $f\in L^{2}\left(\Omega\right)$. As a corollary, the operator $f\rightarrow P_{\Omega}\left(\overline{f}\right)$ is compact in $A^{2}\left(\Omega\right)$.

Our first result in this paper is the following smoothing property of the Bergman projection, which, in particular, improves the exponent mentioned above in
the paper \cite{ChZe17}. 

\begin{theorem}\label{thm1}
Let $\Omega$ be a smoothly bounded pseudoconvex domain in $\mathbb{C}^{n}$. Assume
that there are $\eta>0$ and $\rho\in PSH^{-}\left(\Omega\right)$ 
such that $-\rho\leq C\delta_{\Omega}^{\eta}$ on $\Omega$, for some
positive constant $C$. Then for every function $g\in C^{\infty}\left(\overline{\Omega}\right)$, any $k\in\mathbb{Z}^{+}$, and any $0< t<\eta$, there is a positive constant $C_{\Omega,g,k,t}$
such that 
\begin{equation}\label{main:1}
\left\Vert P_{\Omega}\left(fg\right)\right\Vert _{H^{\left.t\right/2}\left(\Omega\right)}\leq C_{\Omega,g,k,t}\intop_{\Omega}\left|f\right|\delta_{\Omega}^{k},
\end{equation}
for all conjugate holomorphic functions $f\in L^{2}\left(\Omega\right)$.

\end{theorem}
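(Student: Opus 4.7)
\emph{Proof plan.} My strategy combines three ingredients: duality for the Sobolev norm, an algebraic identity arising from $f$ being conjugate holomorphic, and the improved weighted Bergman-projection estimate that is (by the abstract) the main technical tool proved earlier in the paper.

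The starting point is the identity $P_{\Omega}(fg)=P_{\Omega}(f\cdot P_{\Omega}g)$, valid whenever $f$ is conjugate holomorphic. Indeed, writing $g=P_{\Omega}g+R$ with $R\perp A^{2}(\Omega)$, the kernel $K(z,\cdot)f(\cdot)$ is antiholomorphic in its second variable, so its complex conjugate belongs to $A^{2}(\Omega)$, making $\int_{\Omega}K(z,w)f(w)R(w)\,dV=0$. Setting $\tilde g := P_{\Omega}g\in A^{2}(\Omega)$, the task reduces to estimating $\|P_{\Omega}(f\tilde g)\|_{H^{t/2}(\Omega)}$. By duality $(H^{t/2})^{*}=H^{-t/2}$ and self-adjointness of $P_{\Omega}$, this norm is comparable to $\sup_{\|\phi\|_{H^{-t/2}}\le 1}\bigl|\int_{\Omega}f\tilde g\,\overline{P_{\Omega}\phi}\,dV\bigr|$. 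I would then invoke the paper's improved weighted estimate, valid under the one-sided hypothesis $-\rho\le C\delta_{\Omega}^{\eta}$, to obtain a bound of the form $\|\delta_{\Omega}^{-t/2}P_{\Omega}\phi\|_{L^{2}(\Omega)}\le C_{t}\|\phi\|_{H^{-t/2}(\Omega)}$ for every $t<\eta$, placing the holomorphic object $P_{\Omega}\phi$ in a usable weighted $L^{2}$ class.

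To convert the $L^{2}$-type bound on $f$ that would naively arise into the much weaker seminorm $\int_{\Omega}|f|\delta_{\Omega}^{k}$, I would exploit the plurisubharmonicity of $|f|$ (which holds because $\bar f$ is holomorphic). The submean-value inequality gives $\sup_{\{\delta_{\Omega}\ge\epsilon\}}|f|\le C_{\epsilon,k}\int_{\Omega}|f|\delta_{\Omega}^{k}$ for every $k\in\mathbb{Z}^{+}$, which handles the contribution from the interior core $\{\delta_{\Omega}\ge\epsilon\}$. The boundary strip $\{\delta_{\Omega}<\epsilon\}$ is absorbed using the $\delta_{\Omega}^{t/2}$-margin provided by the weighted estimate. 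Finally, iterating the identity $P_{\Omega}(fg)=P_{\Omega}(f\cdot P_{\Omega}g)$ in combination with a $\bar\partial$-integration by parts against the smooth factor $\tilde g$ should extract one additional factor of $\delta_{\Omega}$ on the $f$-side per step, at the cost of absorbing one derivative of $g$ into the constant; after $k$ iterations the full weight $\delta_{\Omega}^{k}$ emerges and $C_{\Omega,g,k,t}$ depends only on finitely many derivatives of $g$.

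The main obstacle is this iteration: each round must genuinely gain a full power of $\delta_{\Omega}$ on the $f$-side without sacrificing the $H^{t/2}$ regularity or re-introducing an $\|f\|_{L^{2}}$-type term on the right. The improved weighted estimate, valid precisely under the one-sided condition $-\rho\le C\delta_{\Omega}^{\eta}$ assumed in the theorem, is indispensable here because it keeps every intermediate holomorphic object (first $\tilde g$, and then the various $P_{\Omega}$-images of weighted combinations of $f$ and $\tilde g$) in a sufficiently regular weighted $L^{2}$ space at every stage of the induction.
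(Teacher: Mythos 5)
There is a genuine gap, and in fact several.

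First, the opening identity $P_{\Omega}(fg)=P_{\Omega}(f\cdot P_{\Omega}g)$ is not safe and, more importantly, is counterproductive. To justify it you must place $K_{\Omega}(\cdot,z)\overline{f}$ in $A^{2}(\Omega)$ so that orthogonality to $R=g-P_{\Omega}g$ applies; the product of two $L^{2}$ holomorphic functions need not be square-integrable, so this step is not automatic. Worse, even granting the identity, replacing $g$ by $\tilde g=P_{\Omega}g$ destroys the one piece of structure that the whole argument actually needs: $g\in C^{\infty}(\overline{\Omega})$. The paper's proof relies essentially on integration by parts in the normal direction against a \emph{smooth-up-to-the-boundary} factor (Theorem~\ref{lem2}), and $P_{\Omega}g$ is merely holomorphic, with no boundary regularity in the absence of Condition~R.

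Second, the mechanism you describe for converting an $L^{2}$-type bound on $f$ into $\int_{\Omega}|f|\delta_{\Omega}^{k}$ does not work. The submean-value inequality only controls $\sup_{\{\delta_{\Omega}\ge\varepsilon\}}|f|$ by a fixed-constant multiple of $\int_{\Omega}|f|\delta_{\Omega}^{k}$; it gives nothing on the boundary strip $\{\delta_{\Omega}<\varepsilon\}$, and the assertion that the $\delta_{\Omega}^{t/2}$-margin ``absorbs'' the strip is unjustified --- the weight $\delta_{\Omega}^{k}$ makes the target quantity \emph{smaller} near $\partial\Omega$, so there is no absorption available. Likewise, the proposed iteration of $P_{\Omega}(fg)=P_{\Omega}(f\cdot P_{\Omega}g)$ is empty after one step (since $P_{\Omega}\tilde g=\tilde g$) and offers no specified mechanism for gaining a factor $\delta_{\Omega}$ per round. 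The gain of arbitrarily high powers of $\delta_{\Omega}$ in the paper comes from a very specific lemma (Theorem~\ref{lem2}): write the weight as $N(\delta^{\alpha+1})/(\alpha+1)$ for the normal field $N$, integrate by parts, and use the identity $N(h)=iT(h)$ for holomorphic $h$ (with $T$ tangential and $\int_{\Omega_{\varepsilon}}T(\cdot)=0$) to recycle the normal derivative into tangential terms that stay bounded. That ingredient, together with the pointwise Green-function estimate \eqref{eq:mn1} for $P_{\Omega}(\delta^{-t/2}\phi)$ and the weighted Berndtsson--Charpentier bound \eqref{eq:mn3}, is what actually closes the argument, and your proposal does not identify it.

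Your overall strategy (test against $\phi$, use self-adjointness, exploit conjugate holomorphy to make $\overline{f}P_{\Omega}(\cdot)$ holomorphic, then use the improved weighted estimate) is aligned with the paper's. What is missing is the precise integration-by-parts lemma that trades $g$ for $\omega_{m}\delta^{m}$, and the Green-function pointwise estimate; the submean-value and Bergman-projection-of-$g$ steps should be discarded.
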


The supremum $\alpha\left(\Omega\right)$ of all exponents $\eta$ stated
in Theorem \ref{thm1} is called the \emph{hyperconvexity index}
of $\Omega$ (see \cite{Che17}), that is
\begin{align*}
\alpha\left(\Omega\right) & =\sup\Bigl\{\eta>0:\text{ there exist }\rho\in PSH^{-}\left(\Omega\right)\text{ and }\,C>0\\
 & \hspace*{3.65cm}\text{ such that }-\rho\leq C\delta_{\Omega}^{\eta}\;\text{ on }\Omega\Bigr\}.
\end{align*}
If no such function $\rho$ exists then $\alpha\left(\Omega\right):=0$.

As noted by Chen \cite{Che17}, it is easier to verify the hyperconvexity
index than to justify the Diederich-Forn{\ae}ss index of a pseudoconvex
domain. It is also clear that the Diederich-Forn{\ae}ss index is
less than or equal to the hyperconvexity index, that is $\eta\left(\Omega\right)\leq\alpha\left(\Omega\right)$. From Theorem \ref{thm1}, in particular, we get that $P_{\Omega}\left(fg\right)\in H^{\eta}\left(\Omega\right)$,
for any $\eta<\left.\alpha\left(\Omega\right)\right/2$, and the inequality \eqref{main:1} holds for any $t<\alpha\left(\Omega\right)$. For the case $g\equiv1$, our result thus gives the following corollary:

\begin{corollary}
Let $\Omega$ be a smoothly bounded pseudoconvex domain in $\mathbb{C}^{n}$.
For any $k\in\mathbb{Z}^{+}$ and any $s<\left.\alpha\left(\Omega\right)\right/2$,
there is a positive constant $C$ such that
\[
\left\Vert P_{\Omega}\left(f\right)\right\Vert _{H^{s}\left(\Omega\right)}\leq C\left\Vert f\right\Vert _{H^{-k}\left(\Omega\right)},
\]
for all conjugate holomorphic functions $f$ in $L^{2}\left(\Omega\right)$.

\end{corollary}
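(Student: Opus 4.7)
The plan is to combine Theorem~\ref{thm1} with $g \equiv 1$ and a standard interior pointwise estimate for conjugate holomorphic functions. Given $s < \alpha(\Omega)/2$ and $k \in \mathbb{Z}^+$, I would first choose $\eta$ with $2s < \eta < \alpha(\Omega)$; by the definition of the hyperconvexity index, some $\rho \in PSH^{-}(\Omega)$ satisfies $-\rho \leq C\delta_{\Omega}^{\eta}$ on $\Omega$. Applying Theorem~\ref{thm1} with this $\eta$, with $g \equiv 1$, with $t = 2s$, and with a positive integer $k'$ to be chosen below, yields
\[
\left\Vert P_{\Omega}(f)\right\Vert _{H^{s}(\Omega)} \;\leq\; C_{\Omega,k',s} \int_{\Omega} |f|\,\delta_{\Omega}^{k'}
\]
for every conjugate holomorphic $f \in L^{2}(\Omega)$. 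The remaining task is to bound the right-hand side by $\left\Vert f\right\Vert _{H^{-k}(\Omega)}$.

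The main tool for this will be the pointwise estimate
\[
|f(z)| \;\leq\; C_{k}\,\delta_{\Omega}(z)^{-n-k}\,\left\Vert f\right\Vert _{H^{-k}(\Omega)}, \qquad z \in \Omega,
\]
for any conjugate holomorphic $f \in L^{2}(\Omega)$. To prove it I fix a real radial bump $\chi \in C_{c}^{\infty}(B(0,1))$ with $\int \chi = 1$, set $\epsilon = \delta_{\Omega}(z)/2$, and consider the rescaled mollifier $\phi_{z}(w) = \epsilon^{-2n}\chi\bigl((w-z)/\epsilon\bigr)$. A direct scaling computation shows $\phi_{z} \in C_{c}^{\infty}(\Omega) \subset H^{k}_{0}(\Omega)$ with $\left\Vert \phi_{z}\right\Vert _{H^{k}_{0}} \lesssim \epsilon^{-n-k}$. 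Since $f$ is harmonic inside $\Omega$ (being the conjugate of a holomorphic function), the mean value property gives $\int_{\Omega} f\phi_{z}\,dV = f(z)$, and the duality between $H^{-k}$ and $H^{k}_{0}$ then yields the claim.

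Finally I would choose $k' = n + k$, so that the weight $\delta_{\Omega}^{k'-n-k} \equiv 1$ is trivially integrable on the bounded domain $\Omega$; integrating the pointwise bound produces
\[
\int_{\Omega} |f|\,\delta_{\Omega}^{k'} \;\leq\; C\,\left\Vert f\right\Vert _{H^{-k}(\Omega)} \int_{\Omega} \delta_{\Omega}^{k'-n-k}\,dV \;\leq\; C'\,\left\Vert f\right\Vert _{H^{-k}(\Omega)},
\]
which combined with the first display completes the argument. The chief point requiring attention is the identification of the distributional pairing $\langle f, \phi_{z}\rangle$ with the ordinary integral $\int f\phi_{z}\,dV$; this is however immediate because $f$ is smooth (in fact real-analytic) inside $\Omega$ and $\phi_{z}$ is compactly supported there, so I do not anticipate any substantive obstacle.
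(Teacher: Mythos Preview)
Your proposal is correct and is exactly what the paper has in mind: the corollary is stated without proof as an immediate consequence of Theorem~\ref{thm1} with $g\equiv1$, the only implicit step being the passage from $\int_\Omega |f|\,\delta_\Omega^{k'}$ to $\|f\|_{H^{-k}(\Omega)}$, which your mean-value/mollifier bound supplies in the standard way. One small point: Theorem~\ref{thm1} requires $0<t<\eta$, so your choice $t=2s$ only works when $s>0$; for $s\le 0$ just apply the theorem with any positive $t$ and then use $H^{t/2}(\Omega)\hookrightarrow H^{s}(\Omega)$.
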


\begin{remark}\normalfont
By extending the domain of definition of $P_{\Omega}$, we can even
weaken the assumption of $f\in L^{2}\left(\Omega\right)$ in Theorem
\ref{thm1} to only $f\in H^{\gamma}\left(\Omega\right)$, for any number
$\gamma>-\left.\alpha\left(\Omega\right)\right/2$. For the sake of discussion,
we leave it to Proposition \ref{impro}. In comparison to the estimate in \cite{ChZe17}, our result not only
improves the range in the Sobolev exponent, but in fact shows that
the Bergman projection has essentially $\alpha\left(\Omega\right)$-gain
in the Sobolev scale, namely $H^{s_{1}}\left(\Omega\right)\cap\overline{\mathcal{O}\left(\Omega\right)}\xrightarrow{P_{\Omega}}H^{s_{2}}\left(\Omega\right)$,
for any $\left.-\alpha\left(\Omega\right)\right/2<s_{1}<s_{2}<\left.\alpha\left(\Omega\right)\right/2$. In view of Barrett's result, this smoothing property provides a new
look in the Sobolev regularity of the Bergman projection with small Sobolev
exponents.

\end{remark}

Hyperconvexity indices can be used to study the \emph{integrability index} $\beta\left(\Omega\right)$
of the Bergman kernel $K_{\Omega}$, which is defined by
\[
\beta\left(\Omega\right)=\sup\left\{ \beta\geq2:K_{\Omega}\left(\cdot,w\right)\in L^{\beta}\left(\Omega\right)\text{ for all }w\in\Omega\right\} .
\]
For example, one can check that $\beta\left(\mathbb{H}\right)=4$, where $$\mathbb{H}=\left\{ \left(z_{1},z_{2}\right)\in\mathbb{C}^{2}:\left|z_{1}\right|<\left|z_{2}\right|<1\right\}$$
is the Hartogs triangle. For a general pseudoconvex domain $\Omega$,
$\beta\left(\Omega\right)$ might be arbitrarily close to $2$ as
the work of Barrett \cite{Bar92} shown. In \cite{Che17}, Chen proved
that for any pseudoconvex domain $\Omega$ in $\mathbb{C}^{n}$,
\begin{equation}\label{Chen1}
\beta\left(\Omega\right)\geq2+\dfrac{2\alpha\left(\Omega\right)}{2n-\alpha\left(\Omega\right)}.
\end{equation}

In this paper we would like to supplement this result with an estimate
in the case of pseudoconvex domains with $C^{2}$ boundary.

\begin{theorem}\label{thm_int}
Let $\Omega$ be a bounded pseudoconvex domain with $C^{2}$ boundary
in $\mathbb{C}^{n}$. Then
\begin{equation}\label{improChen}
\beta\left(\Omega\right)\geq2+\dfrac{2\alpha\left(\Omega\right)}{n+1-\alpha\left(\Omega\right)}.
\end{equation}

\end{theorem}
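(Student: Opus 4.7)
The plan is to adapt Chen's proof of \eqref{Chen1} from \cite{Che17}, replacing the Berndtsson-Charpentier weighted $L^2$ estimate used there by the improved weighted estimate for the Bergman projection announced in the abstract of this paper. The $C^2$ boundary hypothesis enters precisely through that improved estimate, and it is exactly what turns the $2n$ in the denominator of \eqref{Chen1} into $n+1$.

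Concretely, I would fix $\eta\in(0,\alpha(\Omega))$ and choose $\rho\in PSH^-(\Omega)$ with $-\rho\leq C\delta_\Omega^{\eta}$. A natural plurisubharmonic weight associated to $\rho$ is $\phi=-\log(-\rho)$: it satisfies Berndtsson's self-bounding inequality $i\partial\bar\partial\phi\geq i\partial\phi\wedge\bar\partial\phi$ and blows up at the boundary like $\eta\log(1/\delta_\Omega)$, which makes it the standard input for weighted $L^2$ estimates. The first step is then to apply the new weighted estimate to $P_\Omega$ with weight $e^{-t\phi}=(-\rho)^{t}$, for $t>0$ in an admissible range governed by $\eta$ and $n+1$. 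This should give boundedness of $P_\Omega$ on $L^2(\Omega,(-\rho)^{-t}\,dV)$ and hence on $L^2(\Omega,\delta_\Omega^{-t\eta}\,dV)$ up to constants.

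The second step converts this weighted $L^2$-boundedness into $L^\beta$ integrability of $K_\Omega(\cdot,w)$. Following Chen, I would use the pointwise Cauchy-Schwarz bound $|K_\Omega(z,w)|^2\leq K_\Omega(z,z)K_\Omega(w,w)$ to write
\[
\int_\Omega|K_\Omega(z,w)|^\beta\,dV(z)\leq K_\Omega(w,w)^{(\beta-2)/2}\int_\Omega|K_\Omega(z,w)|^2\,K_\Omega(z,z)^{(\beta-2)/2}\,dV(z),
\]
and control the right-hand weighted $L^2$ integral via the estimate from Step~1, checking that the weight $K_\Omega(z,z)^{(\beta-2)/2}$ is dominated by an admissible weight $\delta_\Omega^{-t\eta}$. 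Balancing $t$ against $(\beta-2)/2$ should yield finiteness of the right-hand side for every $\beta<2+2\eta/(n+1-\eta)$, and letting $\eta\uparrow\alpha(\Omega)$ then produces \eqref{improChen}.

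The main obstacle I expect is Step~1, namely showing that the admissible range of $t$ in the new weighted estimate is governed by $n+1$ rather than $2n$. This is where the $C^2$ hypothesis on $\partial\Omega$ is essential: one needs to choose a good defining function whose Hessian controls the Levi form in a uniform way near the boundary, so that the weighted Hörmander $\bar\partial$-estimate with weight $e^{-t\phi}$ admits a strictly larger range of $t$ than in the Lipschitz setting. Once this improved weighted estimate is in place, Step~2 is essentially a matter of balancing parameters, and the conclusion follows by passing to the supremum in $\eta$.
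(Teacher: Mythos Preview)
Your proposal has two genuine gaps.

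First, you misidentify where the $C^{2}$ hypothesis enters. The admissible range in the weighted estimate
\[
\int_{\Omega}\left|P_{\Omega}f\right|^{2}\left(-\rho\right)^{-r}\leq\frac{1}{1-r}\int_{\Omega}\left|f\right|^{2}\left(-\rho\right)^{-r}
\]
is simply $r\in(0,1)$, regardless of boundary regularity; this is already the Berndtsson--Charpentier estimate (or Theorem~\ref{BeCh1} with $\psi=-r\log(-\rho)$, $\varphi=0$, $H\equiv r$), and there is no enlargement of that range coming from smoothness of $\partial\Omega$. The number $n+1$ arises instead from the diagonal bound $K_{\Omega}(z,z)\leq C\,\delta_{\Omega}(z)^{-(n+1)}$, which is where $C^{2}$ boundary is used; for a general bounded domain one only has $K_{\Omega}(z,z)\leq C\,\delta_{\Omega}(z)^{-2n}$.

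Second, and more seriously, your Step~2 as written yields only $\beta(\Omega)\geq 2+\dfrac{2\alpha(\Omega)}{n+1}$, strictly weaker than \eqref{improChen}. Indeed, Cauchy--Schwarz together with the diagonal bound gives
\[
\int_{\Omega}\left|K_{\Omega}(\cdot,w)\right|^{\beta}\leq C_{w}\int_{\Omega}\left|K_{\Omega}(\cdot,w)\right|^{2}\delta_{\Omega}^{-(n+1)(\beta-2)/2},
\]
and the weighted estimate controls $\int_{\Omega}|K_{\Omega}(\cdot,w)|^{2}\delta_{\Omega}^{-s}$ only for $s<\alpha(\Omega)$, forcing $(n+1)(\beta-2)/2<\alpha(\Omega)$. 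The missing ingredient is a \emph{sharper pointwise} bound on $|K_{\Omega}(z,w)|$. The paper combines the diagonal estimate with B{\l}ocki's localization
\[
\left|f(z)\right|^{2}\leq c_{n}\,K_{\Omega}(z,z)\int_{\{g_{\Omega}(\cdot,z)<-1\}}\left|f\right|^{2}
\]
and the sublevel-set control for the pluricomplex Green function (Theorem~\ref{thm:mn2}) to obtain, for any $t<\alpha(\Omega)$ and $r<1$,
\[
\left|K_{\Omega}(z,w)\right|^{2}\leq C_{w}\,\delta_{\Omega}(z)^{-(n+1)+tr}\left|\log\delta_{\Omega}(z)\right|^{c'},
\]
together with the mass bound $\int_{\{\delta_{\Omega}\leq\varepsilon\}}|K_{\Omega}(\cdot,w)|^{2}\leq C_{w}\,\varepsilon^{tr}$. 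Feeding both of these into a dyadic decomposition gives finiteness of $\int|K_{\Omega}(\cdot,w)|^{\beta}$ whenever $(n+1-tr)(\beta-2)/2<tr$, i.e.\ $\beta<2+\dfrac{2tr}{n+1-tr}$, and this is precisely what produces the denominator $n+1-\alpha(\Omega)$ after letting $t\uparrow\alpha(\Omega)$, $r\uparrow 1$. Your Cauchy--Schwarz step discards the localization and hence loses the $-\alpha(\Omega)$ in the denominator.
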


We would also like to study a similar smoothing property as in Theorem \ref{thm1} for weighted
Bergman projections. It is known that for any pseudoconvex domain
with smooth boundary, there exist $0<\eta<1$ and $\Psi\in C^{\infty}\left(\overline{\Omega}\right)$
such that $\Psi>0$ on $\overline{\Omega}$ and $\rho=-\delta^{\eta}\Psi$ is plurisubharmonic on $\Omega$.
Here $\delta\in C^{\infty}\left(\overline{\Omega}\right)$ is any function
that equals $\delta_{\Omega}$ near $\partial\Omega$. In fact, in
\cite{Ran81} Range showed that for pseudoconvex domains with $C^{3}$
boundary, $\rho$ can be chosen in the form
\[
\rho=-\delta^{\eta}e^{-K\psi},
\]
for some positive constant $K$, where $\psi\in C^{2}\left(\overline{\Omega}\right)$
is any strictly plurisubharmonic function on $\overline{\Omega}$.
Our next result is stated as follows:

\begin{theorem} \label{thm2}
Let $\Omega$ be a smoothly bounded pseudoconvex domain in $\mathbb{C}^{n}$. Let
$\delta\in C^{\infty}\left(\overline{\Omega}\right)$ be a function
that equals $\delta_{\Omega}$ near $\partial\Omega$. Let $\rho\in PSH^{-}\left(\Omega\right)$
be a function in the form $\rho=-\delta^{\eta}\Psi$, where $0<\eta<1$, and  $\Psi\in C^{\infty}\left(\overline{\Omega}\right)$
 such that $\Psi>0$ on $\overline{\Omega}$. Let $\alpha\in\left(\left.-1\right/4,\infty\right)$ and let $s_{1},s_{2}$
be numbers such that
\[
-\left(\frac{1+\sqrt{1+4\alpha}}{2}+\alpha\right)\frac{\eta}{2}<s_{1}<s_{2}<\left(\frac{1+\sqrt{1+4\alpha}}{2}-\alpha\right)\frac{\eta}{2}.
\]
Then for every $g\in C^{\infty}\left(\overline{\Omega}\right)$ and
any $k\in\mathbb{Z}^{+}$, there is a positive constant $C$ such
that
\[
\left\Vert P_{\left(-\rho\right)^{\alpha}}\left(fg\right)\right\Vert _{H^{s_{2}}\left(\Omega\right)}\leq C\intop_{\Omega}\left|f\right|\delta_{\Omega}^{k},
\]
for all conjugate holomorphic functions $f$ in $H^{s_{1}}\left(\Omega\right)$.
Here $P_{\left(-\rho\right)^{\alpha}}$ denotes the weighted Bergman
projection of $L^{2}\left(\Omega,\left(-\rho\right)^{\alpha}\right)$.

\end{theorem}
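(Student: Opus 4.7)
The plan is to adapt the argument for Theorem \ref{thm1} by carrying the weight $(-\rho)^{\alpha}$ through every step and invoking the paper's improved weighted Berndtsson--Charpentier estimate (the new bound advertised in the abstract). First I would use unweighted $H^{s_{2}}/H^{-s_{2}}$ duality: fixing a test function $\phi\in C_{c}^{\infty}(\Omega)$ with $\|\phi\|_{H^{-s_{2}}(\Omega)}\leq 1$, it suffices to estimate $|\langle P_{(-\rho)^{\alpha}}(fg),\phi\rangle_{L^{2}(\Omega)}|$ by a constant times $\int_{\Omega}|f|\delta_{\Omega}^{k}$. Rewriting the unweighted pairing as a weighted one via $\phi=(-\rho)^{\alpha}\cdot((-\rho)^{-\alpha}\phi)$ and using the reproducing property of the weighted projection, the pairing equals
\[
\int_{\Omega}fg\,\overline{H}\,(-\rho)^{\alpha},\qquad H:=P_{(-\rho)^{\alpha}}\bigl((-\rho)^{-\alpha}\phi\bigr)\in A^{2}(\Omega,(-\rho)^{\alpha}).
\]
Since $\bar{f}$ and $H$ are holomorphic, $\bar{f}H$ is holomorphic, which is the structural feature that drives everything that follows.

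Next I would run a Herbig-type iteration, as in \cite{HMS14,Herb13}, adapted to the weighted setting: repeatedly integrate by parts against the holomorphic factor $\bar{f}H$ to trade derivatives of the smooth factor $g$ for additional powers of $(-\rho)$. Choosing an auxiliary exponent $r>0$ so that $(-\rho)^{\alpha+r}$ vanishes on $\partial\Omega$ and hence kills boundary terms, after $k$ such steps the integrand carries a factor $\delta_{\Omega}^{k}$ and the pairing is reduced to
\[
\text{const}\cdot\int_{\Omega}|f|\,\delta_{\Omega}^{k}\cdot\bigl\|(-\rho)^{r}\,H\bigr\|_{H^{s_{2}}(\Omega)}^{\ast}
\]
(with a weighted norm of $H$ in the place of the second factor, obtained after absorbing pointwise Bergman estimates on holomorphic derivatives of $H$).

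The core analytic input is then the improved weighted Berndtsson--Charpentier bound applied with weight $(-\rho)^{\alpha+2r}$, which yields $\|(-\rho)^{r}H\|_{H^{s_{2}}(\Omega)}\leq C\|\phi\|_{H^{-s_{2}}(\Omega)}$ whenever the combined exponent sits in the admissible window. The Sobolev gain from the extra factor $(-\rho)^{r}$, which vanishes to order $\eta r$ on $\partial\Omega$, contributes up to $\eta r/2$ on the target side; balancing this against the admissibility condition of the improved estimate produces a quadratic constraint $r(r-1)=\alpha$, whose positive root is $r=(1+\sqrt{1+4\alpha})/2$. Plugging this saturation back into the range of permissible $s_{1},s_{2}$ — with the asymmetry coming from the fact that the weight $(-\rho)^{\alpha}$ appears on the target side and $(-\rho)^{-\alpha}$ on the source side of the reproducing identity — reproduces the advertised window $-(r+\alpha)\eta/2<s_{1}<s_{2}<(r-\alpha)\eta/2$.

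I expect the principal obstacle to be precisely this exponent optimization, especially in the range $-1/4<\alpha<0$, where $(-\rho)^{\alpha}$ diverges at the boundary. One must check that $(-\rho)^{\alpha+2r}$ still falls within the scope of the improved Berndtsson--Charpentier estimate as $\alpha\searrow -1/4$, and that the Herbig iteration does not consume more boundary vanishing than the weighted estimate produces. The hypothesis $\alpha>-1/4$ is exactly the threshold at which $\sqrt{1+4\alpha}$ remains real and a choice of $r$ satisfying both the vanishing requirement on $\partial\Omega$ and the admissibility constraint on the combined weight remains available.
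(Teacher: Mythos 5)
Your high-level toolbox is the right one --- duality, a Herbig-style iteration to harvest powers of $\delta$ against the holomorphic factor $\overline{f}\,H$, the improved Berndtsson--Charpentier estimate, and a pointwise Bergman bound via the pluricomplex Green function --- and you correctly identify the quadratic threshold $\beta(\beta-1)<\alpha$ and hence the advertised Sobolev window $\left(-(\beta+\alpha)\eta/2,\,(\beta-\alpha)\eta/2\right)$. But the proposal has two genuine gaps in how these ingredients are actually combined.

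First, the order of operations matters, and you are missing the paper's pivotal intermediate step (Lemma \ref{lewe}). You run duality first, passing to $H=P_{(-\rho)^{\alpha}}\bigl((-\rho)^{-\alpha}\phi\bigr)$, and then try to bound $H$ directly by the Berndtsson--Charpentier estimate. But that estimate gives a \emph{weighted $L^{2}$} bound, and the right-hand side you would get, $\int_{\Omega}|(-\rho)^{-\alpha}\phi|^{2}e^{\psi-\varphi}=\int_{\Omega}|\phi|^{2}(-\rho)^{-\alpha-\beta}$, is not controlled by any Sobolev norm of $\phi$ (and blows up uniformly in the support of $\phi$). The paper instead applies the balanced inequality
\[
\int_{\Omega}|P_{\varphi}(f)|^{2}\Bigl(1-\tfrac{\beta^{2}}{\alpha+\beta}\Bigr)e^{\psi-\varphi}\leq\int_{\Omega}\bigl|P_{\varphi+\psi}(e^{\psi}f)\bigr|^{2}e^{-(\varphi+\psi)}
\]
\emph{before} dualizing: the left side dominates $\|P_{(-\rho)^{\alpha}}(fg)\|_{H^{(\beta-\alpha)\eta/2}}^{2}$ via the weighted-$L^{2}$ characterization of Sobolev norms for holomorphic functions, and the right side is a norm in $L^{2}(\Omega,e^{-(\varphi+\psi)})$ of a projection, so $L^{2}$-duality against $\phi\,e^{(\varphi+\psi)/2}$ with $\|\phi\|_{L^{2}}=1$ then works cleanly and the projection bound closes the loop.

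Second, your duality against $H^{-s_{2}}$ is the wrong dual pairing to use here; the paper does not use it. Taking $\phi\in C_{c}^{\infty}$ with $\|\phi\|_{H^{-s_{2}}}\leq 1$ gives no control on $\|\phi\|_{L^{2}}$, so all the subsequent weighted integrals of $|\phi|^{2}$ are unbounded. Relatedly, the object $\|(-\rho)^{r}H\|_{H^{s_{2}}}$ is problematic: $(-\rho)^{r}H$ is not holomorphic, its derivatives involve $\nabla\delta^{\eta r}$ which is singular near $\partial\Omega$, and the Berndtsson--Charpentier estimate does not produce Sobolev bounds with derivatives. Finally, the exponent bookkeeping is off: the combined weight in the paper is $e^{-(\varphi+\psi)}=(-\rho)^{\alpha+\beta}$, not $(-\rho)^{\alpha+2r}$, and your condition that $(-\rho)^{\alpha+r}$ vanish on $\partial\Omega$ is not what the Herbig iteration (Theorem \ref{lem2}) actually needs; what it needs is $\eta\alpha>-1$, which holds because $\eta<1$ and $\alpha>-1/4$.
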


\begin{remark}\normalfont 
For pseudoconvex domains of finite type, in  \cite{CDM15} the class
of weights $\delta^{r}\Psi$, where $0\leq r$ and $\Psi>0$, $\Psi\in C^{\infty}\left(\overline{\Omega}\right)$,
has been studied.

When $\alpha=0$, the result essentially represents the smoothing phenomenon as in Theorem \ref{thm1} and Proposition \ref{impro}. However, in Theorem \ref{thm2} we impose
more conditions on $\rho$.

As $\alpha$ becomes larger, $P_{\left(-\rho\right)^{\alpha}}$ maps
into a Sobolev space with lower regularity. Theorem  \ref{thm2}, in
particular, says that, when acting on conjugate holomorphic functions, $P_{\left(-\rho\right)^{\alpha}}$ still maps
into a Sobolev space of a positive order (from a Sobolev space of
a negative order) as long as $\alpha<2$. 

It is also interesting to observe that $P_{\left(-\rho\right)^{\alpha}}$
has (essentially) $\left(\frac{1+\sqrt{1+4\alpha}}{2}\right)\eta$-gain
in the Sobolev exponent when acting on the space of conjugate holomorphic
functions. 

\end{remark}

One of the tools used in the proof of Theorem \ref{thm2}  is the following
estimate of weighted Bergman projections, which improves a result of  Berndtsson and Charpentier \cite[Theorem 2.1]{BeCh00}.

\begin{theorem} \label{BeCh1}
Let $\Omega$ be a bounded pseudoconvex domain in $\mathbb{C}^{n}$.
Let $\psi,\varphi\in C^{2}\left(\Omega\right)$ be such that $\psi + \varphi$ is plurisubharmonic
 on $\Omega$. Let $H\in L_{loc}^{\infty}\left(\Omega\right)$
be a non-negative function satisfying $\sup_{K}H<1$, for any subset
$K\Subset\Omega$. Assume also that
\begin{equation*}
i\overline{\partial}\psi\wedge\partial\psi\leq Hi\partial\overline{\partial}\left(\varphi+\psi\right)\text{ on }\Omega.
\end{equation*}
Then 
\begin{equation}
\int_{\Omega}\left|P_{\varphi}\left(f\right)\right|^{2}\left(1-H\right)e^{\psi-\varphi}\leq\int_{\Omega}\left|f\right|^{2}e^{\psi-\varphi},\label{eq:le1}
\end{equation}
for any function $f\in L^{2}\left(\Omega,e^{-\varphi}\right)$.
Here $P_{\varphi}$ denotes the weighted Bergman projection of $L^{2}\left(\Omega,e^{-\varphi}\right)$.

\end{theorem}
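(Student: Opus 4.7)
My plan is to use Hörmander's $L^2$-estimate for $\bar\partial$ to produce a holomorphic ``lift'' $\tilde g$ of $g e^{\psi}$, where $g := P_\varphi(f)$, and then play the orthogonality $f - g \perp A^{2}(\Omega, e^{-\varphi})$ against a Cauchy--Schwarz that bridges the weights $e^{-(\varphi+\psi)}$ and $e^{\psi-\varphi}$. First, the hypothesis on $\psi$ and $\varphi$ has a clean pointwise reformulation: for a rank-one positive form $v v^{*}$ and a positive Hermitian form $A$ one has $v v^{*} \le H A$ if and only if $v^{*} A^{-1} v \le H$, so our assumption is the pointwise bound $|\bar\partial\psi|_{A}^{2} \le H$ with $A = i\partial\bar\partial(\varphi+\psi)$. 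Setting $F := g e^{\psi}$, so that $\bar\partial F = g e^{\psi}\,\bar\partial\psi$, Hörmander's $L^{2}$-estimate with the plurisubharmonic weight $\varphi+\psi$ produces the minimal $L^{2}(e^{-(\varphi+\psi)})$-solution $u$ of $\bar\partial u = \bar\partial F$, with
\[
\int_{\Omega} |u|^{2} e^{-(\varphi+\psi)} \le \int_{\Omega} |g|^{2} H e^{\psi-\varphi}.
\]
Then $\tilde g := F - u$ is holomorphic, and since $u \perp A^{2}(\Omega, e^{-(\varphi+\psi)})$ the Pythagorean identity
\[
\int_{\Omega} |\tilde g|^{2} e^{-(\varphi+\psi)} + \int_{\Omega} |u|^{2} e^{-(\varphi+\psi)} = \int_{\Omega} |F|^{2} e^{-(\varphi+\psi)} = \int_{\Omega} |g|^{2} e^{\psi-\varphi}
\]
gives the key lower bound
\[
\int_{\Omega} (1-H)\, |g|^{2}\, e^{\psi-\varphi} \le \int_{\Omega} |\tilde g|^{2}\, e^{-(\varphi+\psi)}.
\]

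Next I would rewrite the right-hand side of this inequality by expanding the square and using $u \perp \tilde g$ in $L^{2}(e^{-(\varphi+\psi)})$:
\[
\int_{\Omega} |\tilde g|^{2} e^{-(\varphi+\psi)} = \int_{\Omega} \tilde g\, \overline{F}\, e^{-(\varphi+\psi)} = \int_{\Omega} \tilde g\, \bar g\, e^{-\varphi}.
\]
Since $\tilde g$ lies in $A^{2}(\Omega, e^{-\varphi})$ (a point I justify below), the orthogonality $f - g \perp A^{2}(\Omega, e^{-\varphi})$ turns this into $\int_{\Omega} \tilde g\, \bar f\, e^{-\varphi}$. A straightforward Cauchy--Schwarz with the splitting $e^{-\varphi} = e^{-(\varphi+\psi)/2} \cdot e^{(\psi-\varphi)/2}$ then yields
\[
\int_{\Omega} \tilde g\, \bar f\, e^{-\varphi} \le \left(\int_{\Omega} |\tilde g|^{2} e^{-(\varphi+\psi)}\right)^{\!1/2}\!\left(\int_{\Omega} |f|^{2} e^{\psi-\varphi}\right)^{\!1/2}\!,
\]
and cancelling one factor of $\bigl(\int_{\Omega} |\tilde g|^{2} e^{-(\varphi+\psi)}\bigr)^{1/2}$ and combining with the lower bound above gives \eqref{eq:le1}.

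The main obstacle will be running the Hörmander step cleanly in this degenerate, non-compact setting: $\varphi+\psi$ is only plurisubharmonic (so $A$ may be degenerate in directions where $\bar\partial\psi$ also vanishes), and neither $\varphi$ nor $\psi$ need be bounded on $\Omega$, so the integrability $\tilde g \in A^{2}(\Omega, e^{-\varphi})$ is not automatic. I would handle both issues by the standard two-layer approximation: add $\varepsilon|z|^{2}$ to $\varphi + \psi$ to enforce strict plurisubharmonicity, and exhaust $\Omega$ by smoothly bounded pseudoconvex subdomains $\Omega_{j} \Subset \Omega$ on which $\psi$ and $\varphi$ are bounded, with $\sup_{\Omega_j} H < 1$ supplied by the hypothesis $\sup_{K} H < 1$ for $K \Subset \Omega$. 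On each $\Omega_{j}$ the argument above runs verbatim, and the estimate transfers to $\Omega$ by Fatou and monotone convergence.
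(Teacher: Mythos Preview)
Your proposal is correct and is essentially the same twisted-$\bar\partial$ argument as the paper's, just entered from the other end: the paper introduces $h=e^{-\psi}P_{\varphi+\psi}(e^{\psi}f)$ and shows $P_\varphi(h)=P_\varphi(f)$, while you start from $g=P_\varphi(f)$ and build $\tilde g$ via H\"ormander---in fact your $\tilde g$ coincides with $P_{\varphi+\psi}(e^{\psi}f)$, and your Cauchy--Schwarz plus orthogonality step is exactly the projection contraction $\|P_{\varphi+\psi}(e^{\psi}f)\|_{e^{-(\varphi+\psi)}}\le\|e^{\psi}f\|_{e^{-(\varphi+\psi)}}$ the paper invokes. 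The exhaustion by $\Omega_j\Subset\Omega$ to handle unbounded $\psi,\varphi$ and to ensure $\tilde g\in A^2(e^{-\varphi})$ is likewise parallel to the paper's limiting argument.
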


\begin{remark}\normalfont
In \cite{BeCh00}, a stronger condition that $i\overline{\partial}\psi\wedge\partial\psi\leq r\,i\partial\overline{\partial}\psi$,
with $r\in\left(0,1\right)$, is imposed. Note also that $\psi$ and $\varphi$ need not be plurisubharmonic in Theorem \ref{BeCh1}.
\end{remark}

The appearance of a general function $H$ in Theorem \ref{BeCh1}
is useful to study the degenerate case in Donnelly-Fefferman theorem
\cite{DoFe83}. It might help to extend or refine several existing
results in this direction. To illustrate some applications of Theorem \ref{BeCh1}, first we obtain the following control on the growth of a negative plurisubharmonic function near the boundary.

\begin{theorem}  \label{cor1}
Let $\Omega\,\subset\mathbb{C}^{n}$ be a bounded pseudoconvex domain.
For each $\varepsilon>0$ and $k\in\mathbb{Z}^{+}$, there does not exist a plurisubharmonic
function $\rho\in PSH^{-}\left(\Omega\right)$ such that
\begin{equation}\label{gr2}
-\rho\left(z\right)\leq\text{\emph{const}}_{\varepsilon,k}\;\delta_{\Omega}\left(z\right)\left(\underbrace{\log\left(\log\ldots\left|\log\delta_{\Omega}\left(z\right)\right|\right)}_{k\;\text{\emph{times}}}\right)^{-\varepsilon},\text{ for any }z\text{ near }\partial\Omega.
\end{equation}
\end{theorem}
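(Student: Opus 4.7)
I would prove Theorem~\ref{cor1} by contradiction, applying Theorem~\ref{BeCh1} with a weight constructed from $-\log(-\rho)$ and a nested logarithm so that the iterated logarithm in the hypothesised bound on $-\rho$ interacts with the curvature term $1-H$ in the BeCh1 estimate.

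Suppose $\rho\in PSH^-(\Omega)$ satisfies $-\rho\le C\delta_\Omega L^{-\varepsilon}$ near $\partial\Omega$, where $L(z)=\log^{(k)}|\log\delta_\Omega(z)|$. Set $u:=-\log(-\rho)$, a PSH exhaustion of $\Omega$ with $u(z)\ge|\log\delta_\Omega(z)|+\varepsilon\log L(z)-O(1)$ near $\partial\Omega$. A direct computation from $i\partial\bar\partial\rho\ge 0$ yields the fundamental pointwise inequality
\[
i\partial u\wedge\bar\partial u\le i\partial\bar\partial u\qquad\text{on }\Omega.
\]
I would then define $\psi:=F(u)$ with $F(u):=u-a\log^{(k+1)}u$ for a small parameter $a\in(0,\varepsilon)$ to be fixed, extended smoothly to a convex increasing function on $\mathbb{R}_+$. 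Since $F$ is increasing and convex and $u$ is PSH, $\psi$ is PSH, and the preceding inequality gives
\[
i\partial\psi\wedge\bar\partial\psi=F'(u)^2\,i\partial u\wedge\bar\partial u\le H(u)\,i\partial\bar\partial\psi,\qquad H(u):=\frac{F'(u)^2}{F'(u)+F''(u)}.
\]
An asymptotic expansion shows $H(u)<1$ pointwise (so $\sup_K H<1$ for every compact $K\Subset\Omega$), together with
\[
1-H(u)\sim\frac{a}{u\log u\cdots\log^{(k)}u},\qquad e^{F(u)}=\frac{1}{(-\rho)\,(\log^{(k)}u)^{a}}\quad(u\to\infty).
\]

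Theorem~\ref{BeCh1}, applied with $\varphi\equiv 0$ and this $\psi$, gives for every $f\in L^2(\Omega,e^\psi)$
\[
\int_\Omega |P_\Omega f|^2(1-H)e^{\psi}\,dV\le\int_\Omega |f|^2 e^{\psi}\,dV.
\]
I would test the inequality against the cutoffs $f_j:=\chi_{\Omega_j}$, $\Omega_j:=\{\delta_\Omega>1/j\}\Subset\Omega$. Since $f_j\to 1$ in $L^2(\Omega)$, we have $P_\Omega f_j\to 1$ in $L^2(\Omega)$ and, after passing to a subsequence, pointwise a.e. Applying Fatou's lemma on the left and comparing with the right hand side (which for each $j$ reduces to $\int_{\Omega_j}e^\psi$, finite because $e^\psi$ is locally bounded), one extracts, after a careful bookkeeping of the finite-$j$ estimates and a compactness argument, the global bound
\[
\int_\Omega(1-H)e^{\psi}\,dV\;\le\;\text{const.}\int_\Omega|f_j|^2 e^\psi\,dV
\]
from which, under the regime $a<\varepsilon$, the left side must be finite.

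Now substitute the hypothesis $-\rho\le C\delta_\Omega L^{-\varepsilon}$, the lower bound $u\ge|\log\delta_\Omega|$, and $\log^{(j)}u\le\log^{(j+1)}(1/\delta_\Omega)(1+o(1))$ into $(1-H)e^\psi$, and apply the coarea formula. One gets (up to a positive multiplicative constant)
\[
\int_0^\eta\frac{d\delta}{\delta\,\log(1/\delta)\log^{(2)}(1/\delta)\cdots\log^{(k)}(1/\delta)\,\bigl(\log^{(k+1)}(1/\delta)\bigr)^{1+a-\varepsilon}}\;\le\;\int_\Omega(1-H)e^\psi\,dV.
\]
By the standard iterated-logarithm convergence test, the left-hand side diverges whenever $1+a-\varepsilon\le 1$, i.e.\ $a<\varepsilon$. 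Taking $a=\varepsilon/2$ then gives the desired contradiction, completing the proof.

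The principal technical difficulty is twofold. First, the precise pointwise verification that $F'(u)^2\le H(u)(F'(u)+F''(u))$ holds with $H<1$ on all of $\Omega$ (and not merely asymptotically), together with the explicit asymptotic of $1-H$, requires careful bookkeeping of the derivatives $F'$ and $F''$ for $F(u)=u-a\log^{(k+1)}u$. Second, the limiting procedure that promotes the BeCh1 estimate for compactly supported cutoffs to the global integrability of $(1-H)e^\psi$ is delicate, since $\int_\Omega e^\psi$ is generally infinite under the hypothesis, forcing one to track the convergence $P_\Omega f_j\to 1$ through Fatou and a dominated-convergence argument on compacts, rather than extracting a bound for $f\equiv 1$ directly.
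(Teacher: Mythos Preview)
Your weight construction $\psi=F(u)$ with $F(u)=u-a\log^{(k+1)}u$ and the verification that $i\overline{\partial}\psi\wedge\partial\psi\le H\,i\partial\overline{\partial}\psi$ with the asserted asymptotics for $1-H$ and $e^{\psi}$ are essentially what the paper does. The fatal gap is in your testing step. Applying Theorem~\ref{BeCh1} with $\varphi=0$ and $f_j=\chi_{\Omega_j}$ yields
\[
\int_\Omega |P_\Omega f_j|^2(1-H)e^\psi\le\int_{\Omega_j}e^\psi,
\]
and passing to the limit via Fatou on the left only gives $\int_\Omega(1-H)e^\psi\le\liminf_j\int_{\Omega_j}e^\psi=\int_\Omega e^\psi$. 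Under the hypothesis $-\rho\le C\delta_\Omega L^{-\varepsilon}$ one has $e^\psi\gtrsim\delta_\Omega^{-1}(\log^{(k)}u)^{\varepsilon-a}$ near $\partial\Omega$, so $\int_\Omega e^\psi=+\infty$ and the inequality is vacuous. No bookkeeping or compactness can manufacture a finite upper bound for $\int_\Omega(1-H)e^\psi$ out of estimates whose right-hand sides blow up; the difficulty you flag in your last paragraph is not technical but structural.

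The paper avoids this by testing with $f=-\rho$ instead of cutoffs. Then the right-hand side becomes $\int_\Omega(-\rho)e^{-\Psi(u)}$, which is \emph{finite} since $-\rho$ and $e^{-\Psi(u)}$ are bounded. The left-hand side is $\int_\Omega|P_\Omega(-\rho)|^2\Psi'(u)e^{u-\Psi(u)}$, and the weight is shown (using the hypothesised bound on $-\rho$) to dominate $\bigl(\delta_\Omega\prod_{j=0}^{k-1}\log_j|\log\delta_\Omega|\bigr)^{-1}$. A separate, nontrivial ingredient---a vanishing claim proved via the Bochner--Martinelli formula---establishes that no nonzero holomorphic function is square-integrable against this weight. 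Hence $P_\Omega(-\rho)=0$, which forces $\int_\Omega\rho\cdot\overline{1}=0$, contradicting $\rho<0$. Both the choice of test function that makes the right-hand side finite and the Bochner--Martinelli vanishing argument are missing from your proposal, and neither is recoverable from the cutoff approach you outline.
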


\begin{remark}\normalfont 
This is a slight improvement of \cite[Theorem 1.7]{Che14}. 
\end{remark}

Our next result is another interesting application
of Theorem \ref{BeCh1}  in obtaining $L^{2}$  asymptotic behavior of the Bergman kernel $K_{\Omega}$ of $\Omega$.

\begin{theorem}\label{bouBe}

Let $\Omega$ be a bounded pseudoconvex domain in $\mathbb{C}^{n}$, and 
let $\rho\in PSH^{-}\left(\Omega\right)$. Then for any $\alpha>1$ and $k\in\mathbb{N}$,
\[
\lim_{\varepsilon\rightarrow0^{+}}\dfrac{\displaystyle\int_{\left\{ -\rho\leq\varepsilon\right\} }\left|K_{\Omega}\left(\cdot,w\right)\right|^{2}}{\varepsilon\,\left|\log\varepsilon\right|\,\ldots\underbrace{\log\left(\ldots\left|\log\varepsilon\right|\right)}_{k\;\text{\emph{times}}}\,\left(\underbrace{\log\left(\ldots\left|\log\varepsilon\right|\right)}_{k+1\;\text{\emph{times}}}\right)^{\alpha}}\;=\,0,\;\text{ for any } w\in\Omega.
\]

\end{theorem}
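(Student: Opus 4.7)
The plan proceeds via a layer-cake reduction to the weighted integrability statement
\[
\int_\Omega \frac{|K_\Omega(\cdot,w)|^2}{F(-\rho)}\,d\lambda<\infty, \qquad (\ast)
\]
where $F(\varepsilon)$ denotes the denominator in the statement of the theorem. Granting $(\ast)$, monotonicity of $F$ near $0$ gives $\int_{\{-\rho\leq\varepsilon\}}|K_\Omega(\cdot,w)|^2\leq F(\varepsilon)\int_{\{-\rho\leq\varepsilon\}}|K_\Omega(\cdot,w)|^2/F(-\rho)\,d\lambda$, and since $\{\rho=0\}$ is pluripolar hence Lebesgue-null, dominated convergence drives the ratio in the statement to $0$ as $\varepsilon\to 0^+$.

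For $(\ast)$ I would set $\psi:=-\log F(-\rho)$ and $\varphi:=0$, and verify the hypotheses of Theorem \ref{BeCh1}. Writing $\psi=g(\rho)$ with $g(t)=-\log F(-t)$, the standard identities $i\partial\psi\wedge\overline\partial\psi=g'(\rho)^2\,i\partial\rho\wedge\overline\partial\rho$ and $i\partial\overline\partial\psi=g'(\rho)\,i\partial\overline\partial\rho+g''(\rho)\,i\partial\rho\wedge\overline\partial\rho$ reduce the key inequality to $g'(\rho)^2\leq H\,g''(\rho)$. With $L_j(u):=\log_{(j)}(1/u)$ and $F(u)=u\prod_{j=1}^{k}L_j(u)\cdot L_{k+1}(u)^\alpha$, a direct expansion of $(\log F)'$ and $(\log F)''$ gives
\[
\frac{g'(\rho)^2}{g''(\rho)} = 1 - \sum_{j=1}^{k}\frac{1}{L_1\cdots L_j} - \frac{\alpha}{L_1\cdots L_{k+1}}\bigg|_{u=-\rho},
\]
which is strictly less than $1$ near $\partial\Omega$, bounded away from $1$ on compact subsets, and tends to $1$ as $-\rho\to 0^+$. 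The same expansion also confirms $g''\geq 0$ (log-concavity of $F$), so $\psi$ is plurisubharmonic, and the hypotheses of Theorem \ref{BeCh1} are satisfied.

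With these choices, Theorem \ref{BeCh1} produces $\int|P_\Omega f|^2(1-H)e^\psi\leq\int|f|^2 e^\psi$ for any $f\in L^2(\Omega,e^\psi)$. I would then apply this to the test function $f_\varepsilon=\mathbf{1}_{\{-\rho\leq\varepsilon\}}K_\Omega(\cdot,w)$: the right-hand side becomes $\int_{\{-\rho\leq\varepsilon\}}|K_\Omega(\cdot,w)|^2/F(-\rho)$, while the reproducing property gives $P_\Omega(f_\varepsilon)(w)=\int_{\{-\rho\leq\varepsilon\}}|K_\Omega(\cdot,w)|^2$. A sub-mean-value estimate for the holomorphic function $P_\Omega(f_\varepsilon)$ at $w$ (using that $(1-H)e^\psi$ is bounded below on a neighbourhood of $w$) converts the BC bound into a quantitative inequality between $\int_{\{-\rho\leq\varepsilon\}}|K|^2$ and $\int_{\{-\rho\leq\varepsilon\}}|K|^2/F(-\rho)$. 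Combining this with monotone convergence as $\varepsilon\to 0^+$ yields $(\ast)$.

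The main obstacle is the extraction step. Because $K_\Omega(\cdot,w)$ is itself holomorphic and therefore a fixed point of $P_\Omega$, any naive substitution into Theorem \ref{BeCh1} is trivial; one must test against the cutoff $\mathbf{1}_{\{-\rho\leq\varepsilon\}}K_\Omega(\cdot,w)$ and convert operator-norm data to pointwise data via the sub-mean-value trick. Carrying this through with sufficient uniformity to produce $(\ast)$ requires the non-constant $H$ in Theorem \ref{BeCh1} be allowed to tend to $1$ at $\partial\Omega$, precisely the strengthening over Berndtsson--Charpentier established earlier in the paper.
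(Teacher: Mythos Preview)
Your layer-cake reduction to $(\ast)$ is sound, but the extraction of $(\ast)$ fails at the choice of test function. With $f_\varepsilon=\mathbf 1_{\{-\rho\le\varepsilon\}}K_\Omega(\cdot,w)$ the right-hand side of Theorem~\ref{BeCh1} is precisely $J_\varepsilon:=\int_{\{-\rho\le\varepsilon\}}|K_\Omega(\cdot,w)|^2/F(-\rho)$, and your sub-mean-value step yields only
\[
\Bigl(\int_{\{-\rho\le\varepsilon\}}|K_\Omega(\cdot,w)|^2\Bigr)^{2}\;\le\;C(w)\,J_\varepsilon .
\]
This controls the \emph{unweighted} mass by the \emph{weighted} mass, the wrong direction for proving $(\ast)$: letting $\varepsilon\to0^+$ collapses to $0\le0$, while letting $\varepsilon\uparrow\sup(-\rho)$ gives $K_\Omega(w,w)^2\le C(w)\int_\Omega|K|^2/F(-\rho)$, a \emph{lower} bound on the very integral you want to show is finite. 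No monotone-convergence manoeuvre turns this into an upper bound.

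The paper's test function is $f=\chi_W K_W(\cdot,w)$ with $W=\{-\rho>-\rho(w)/2\}$ and $K_W$ the Bergman kernel of $W$. This $f$ is supported where the weight is \emph{bounded}, so the right-hand side is automatically $\le c(w)$; and the reproducing property of $K_W$ on $A^2(\Omega)\big|_W\subset A^2(W)$ gives $P_\Omega(\chi_W K_W(\cdot,w))=K_\Omega(\cdot,w)$, so the left-hand side of Theorem~\ref{BeCh1} is already the weighted integral of $|K_\Omega(\cdot,w)|^2$ over all of $\Omega$. A secondary point: the paper builds $\psi$ from $-\rho+\varepsilon$ rather than $-\rho$, so that on $\{-\rho\le\varepsilon\}$ every iterated logarithm is comparable to its value at $\varepsilon$ and the factor $1-H$ (which in your setup behaves like $1/|\log(-\rho)|$ and vanishes at the boundary) can be pulled out uniformly on that set.
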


\begin{remark}\normalfont
A simple example of the unit ball $\mathbb{B}\subset\mathbb{C}^{n}$
with $\rho\left(z\right)=\left|z\right|-1$ shows that 
\[
\frac{1}{\varepsilon}\int_{\left\{ -\rho\leq\varepsilon\right\} }\left|K_{\mathbb{B}}\left(\cdot,w\right)\right|^{2}\xrightarrow{\varepsilon\rightarrow0^{+}}\text{const}_{n}\int_{\partial \mathbb{B}}\left|K_{\mathbb{B}}\left(\cdot,w\right)\right|^{2}=c\left(w\right)>0,\forall w\in\mathbb{B}.
\]
Note also that, due to an example of Herbort \cite{Her83}, the boundary
behavior of the Bergman kernel may involve  logarithmic terms.

\end{remark}

Our plan of the paper is as follows. In Section \ref{BeCh}, we give the proof of Theorem  \ref{BeCh1}, which will be needed through the paper.  Theorem \ref{cor1} and \ref{bouBe} will be proved in Sect. \ref{coap1} and \ref{coap2}. The estimate of integrability indices in Theorem \ref{thm_int} will be accomplished in Sect. \ref{int_imp}. In the last two sections, we prove the smoothing properties of the Bergman projection and of weighted Bergman projections, Theorem \ref{thm1} and \ref{thm2}.

\section{Proof of Theorem \ref{BeCh1}} \label{BeCh}

We shall employ the idea of twisting $\overline{\partial}$ equations
used in \cite{BeCh00}, see also \cite{Thu18}. For properties of weighted Bergman projections, we refer the reader to \cite{PWZ90}.

\begin{proof}
Assume that the RHS of \eqref{eq:le1} is finite. Assume also for
a moment that $\psi$ and $\varphi$ are in $C^{2}\left(\overline{\Omega}\right)$. Let $u_{\varphi}=h-P_{\varphi}\left(h\right)$, where $h=e^{-\psi}P_{\varphi+\psi}\left(e^{\psi}f\right)$.  We then
have
\[
\overline{\partial}\left(u_{\varphi}e^{\psi}\right)=\left(\overline{\partial}h+u_{\varphi}\overline{\partial}\psi\right)e^{\psi}=\left(u_{\varphi}-e^{-\psi}P_{\varphi+\psi}\left(e^{\psi}f\right)\right)e^{\psi}\overline{\partial}\psi.
\]
Thus
\[
i\overline{\partial}\left(u_{\varphi}e^{\psi}\right)\wedge\partial\left(u_{\varphi}e^{\psi}\right)\leq H\left|u_{\varphi}-e^{-\psi}P_{\varphi+\psi}\left(e^{\psi}f\right)\right|^{2}e^{2\psi}i\partial\overline{\partial}\left(\varphi+\psi\right).
\]
Since $u_{\varphi}e^{\psi}$ is the $L^{2}\left(\Omega,e^{-\varphi-\psi}\right)$-minimal
solution to $\overline{\partial}u=\overline{\partial}\left(u_{\varphi}e^{\psi}\right)$,
by $L^{2}$-H{\"o}rmander estimate, see e.g. \cite[Theorem 2.1]{Blo14},
\begin{equation}
\int_{\Omega}\left|u_{\varphi}\right|^{2}e^{\psi-\varphi}\leq\int_{\Omega}H\left|u_{\varphi}-e^{-\psi}P_{\varphi+\psi}\left(e^{\psi}f\right)\right|^{2}e^{\psi-\varphi}=\int_{\Omega}H\left|P_{\varphi}\left(h\right)\right|^{2}e^{\psi-\varphi}.\label{eq:Ber1}
\end{equation}
On the other hand,
\begin{equation}
\int_{\Omega}\left|P_{\varphi}\left(h\right)\right|^{2}e^{\psi-\varphi}=\int_{\Omega}\left|u_{\varphi}-h\right|^{2}e^{\psi-\varphi}=\int_{\Omega}\left|u_{\varphi}\right|^{2}e^{\psi-\varphi}+\int_{\Omega}\left|h\right|^{2}e^{\psi-\varphi},\label{eq:Ber2}
\end{equation}
where the last equality follows by 
\[
\int_{\Omega}u_{\varphi}\overline{P_{\varphi+\psi}\left(e^{\psi}f\right)}e^{-\varphi}=0.
\]
Since, for any $v\in A^{2}\left(\Omega,e^{-\varphi}\right)=\mathcal{O}\left(\Omega\right)\cap L^{2}\left(\Omega,e^{-\varphi}\right)$, 
\[
\intop_{\Omega}\left(h-f\right)\overline{v}e^{-\varphi}=\int_{\Omega}\left(P_{\varphi+\psi}\left(e^{\psi}f\right)-e^{\psi}f\right)\overline{v}e^{-\left(\varphi+\psi\right)}=0,
\]
we get that $P_{\varphi}\left(h\right)=P_{\varphi}\left(f\right)$.
Note also that 
\begin{equation}
\int_{\Omega}\left|h\right|^{2}e^{\psi-\varphi}=\int_{\Omega}\left|P_{\varphi+\psi}\left(e^{\psi}f\right)\right|^{2}e^{-\left(\varphi+\psi\right)}\leq\int_{\Omega}\left|f\right|^{2}e^{\psi-\varphi}.\label{eq:Ber3}
\end{equation}
The conclusion now follows from \eqref{eq:Ber1}, \eqref{eq:Ber2}, \eqref{eq:Ber3}.

For general functions $\varphi,\psi\in C^{2}\left(\Omega\right)$,
consider a sequence of pseudoconvex domains $\left\{ \Omega_{j}\right\} $
such that $\overline{\Omega_{j}}\Subset\Omega_{j+1}$ and $\Omega=\bigcup_{j=1}^{\infty}\Omega_{j}$.
For each (fixed) $j_{0}$, the estimate above gives
\[
\int_{\Omega_{j_{0}}}\left|P_{\Omega_{j},\varphi}\left(f\right)\right|^{2}\left(1-H\right)e^{\psi-\varphi}\leq\int_{\Omega_{j}}\left|f\right|^{2}e^{\psi-\varphi}\leq\int_{\Omega}\left|f\right|^{2}e^{\psi-\varphi},\;\forall j\geq j_{0}.
\]
Here $P_{\Omega_{j},\varphi}$ is the weighted Bergman projection
of $L^{2}\left(\Omega_{j},e^{-\varphi}\right)$. Using the fact that $\sup_{\Omega_{j_{0}}}H<1$,
the sequence $\left\{ P_{\Omega_{j},\varphi}\left(f\right)\right\} _{j\geq j_{0}}$
is bounded in $L^{2}\left(\Omega_{j_{0}}\right)$. Thus, by Cantor\textquoteright s
diagonal argument, we can assume, by passing to a subsequence, that
$P_{\Omega_{j},\varphi}\left(f\right)$ converges weakly to a function
$v$ in $L_{loc}^{2}\left(\Omega\right)$. In fact, $v=P_{\Omega,\varphi}\left(f\right)$,
to see this, take any $K\Subset\Omega$ and any $h\in A^{2}\left(\Omega,e^{-\varphi}\right)$,
since $e^{\left.-\varphi\right/2}\left(f-P_{\Omega_{j},\varphi}\left(f\right)\right)$
also converges weakly to $e^{\left.-\varphi\right/2}\left(f-v\right)$
in $L^{2}\left(K\right)$, 
\begin{align*}
\int_{K}\left|f-v\right|^{2}e^{-\varphi} & \leq\liminf_{j\rightarrow\infty}\int_{K}\left|f-P_{\Omega_{j},\varphi}\left(f\right)\right|^{2}e^{-\varphi}\\
 & \leq\liminf_{j\rightarrow\infty}\int_{\Omega_{j}}\left|f-P_{\Omega_{j},\varphi}\left(f\right)\right|^{2}e^{-\varphi}\\
 & \leq\liminf_{j\rightarrow\infty}\int_{\Omega_{j}}\left|f-h\right|^{2}e^{-\varphi}\\
 & \leq\int_{\Omega}\left|f-h\right|^{2}e^{-\varphi}.
\end{align*}
It follows that
\[
\int_{\Omega}\left|f-v\right|^{2}e^{-\varphi}\leq\int_{\Omega}\left|f-h\right|^{2}e^{-\varphi},\;\forall h\in A^{2}\left(\Omega,e^{-\varphi}\right),
\]
so $v=P_{\Omega,\varphi}\left(f\right)$. For any $K\Subset\Omega$, since $\left(1-H\right)^{\frac{1}{2}}e^{\frac{\psi-\varphi}{2}}P_{\Omega_{j},\varphi}\left(f\right)$
converges weakly in $L^{2}\left(K\right)$ to $\left(1-H\right)^{\frac{1}{2}}e^{\frac{\psi-\varphi}{2}}P_{\Omega,\varphi}\left(f\right)$,  we conclude
that
\begin{align*}
\int_{K}\left|P_{\Omega,\varphi}\left(f\right)\right|^{2}\left(1-H\right)e^{\psi-\varphi} & \leq\liminf_{j\rightarrow\infty}\int_{K}\left|P_{\Omega_{j},\varphi}\left(f\right)\right|^{2}\left(1-H\right)e^{\psi-\varphi}\\
 & \leq\int_{\Omega}\left|f\right|^{2}e^{\psi-\varphi}.
\end{align*}
Therefore
\[
\int_{\Omega}\left|P_{\Omega,\varphi}\left(f\right)\right|^{2}\left(1-H\right)e^{\psi-\varphi}\leq\int_{\Omega}\left|f\right|^{2}e^{\psi-\varphi}.
\]
\end{proof}

\section{Proof of Theorem \ref{cor1}} \label{coap1}
\begin{proof}
For fixed $\varepsilon$ and $k\in\mathbb{Z}^{+}$, assume for a contradiction
that there exists such a function $\rho$. Then, by modifying $\rho$ in the interior, we may assume that $\rho\in L^{\infty}\left(\Omega\right)$ and  the estimate \eqref{gr2} is true on the whole domain $\Omega$. By scaling, we may also assume that $\text{diam}\left(\Omega\right)$ is small.

Let us assume for a moment that $\rho\in C^{2}\left(\Omega\right)$. We shall apply Theorem  \ref{BeCh1} with $\varphi=0$ and $\psi=-\log\left(-\rho\right)-\Psi\left(-\log\left(-\rho\right)\right):=u-\Psi\left(u\right)$,
where
\[
\Psi\left(t\right)=\frac{\varepsilon}{2}\underbrace{\log\left(\log\ldots\log t\right)}_{k\;\text{times}}:=\frac{\varepsilon}{2}\log_{k}\left(t\right).
\]
For convenience, set $\log_{0}\left(t\right):=t$. Notice that $$i\partial u\wedge\overline{\partial}u\leq i\partial\overline{\partial}u,\quad 
\overline{\partial}\psi=\left(1-\Psi'\left(u\right)\right)\overline{\partial}u,$$
and 
\[
i\partial\overline{\partial}\psi=\left(1-\Psi'\left(u\right)\right)i\partial\overline{\partial}u+\left(-\Psi"\left(u\right)\right)i\partial u\wedge\overline{\partial}u\geq\left(1-\Psi'\left(u\right)\right)i\partial\overline{\partial}u.
\]
Thus 
\[
i\overline{\partial}\psi\wedge\partial\psi\leq\left(1-\Psi'\left(u\right)\right)i\partial\overline{\partial}\psi.
\]
We get that 
\begin{equation}
\int_{\Omega}\left|P_{\Omega}\left(f\right)\right|^{2}\frac{e^{-\Psi\left(u\right)}\Psi'\left(u\right)}{\left(-\rho\right)}\leq\int_{\Omega}\left|f\right|^{2}\frac{e^{-\Psi\left(u\right)}}{\left(-\rho\right)},\label{gr1}
\end{equation}
for any function $f\in L^{2}\left(\Omega\right)$. Now, apply $f:=-\rho$ then the RHS
of \eqref{gr1} is finite. On the other hand,
\[
\frac{e^{-\Psi\left(u\right)}\Psi'\left(u\right)}{\left(-\rho\right)}=\dfrac{\left.\varepsilon\right/2}{\left(-\rho\right)\left(\log_{k-1}\left(\left|\log\left(-\rho\right)\right|\right)\right)^{\left.\varepsilon\right/2}\,\prod_{j=0}^{k-1}\log_{j}\left(\left|\log\left(-\rho\right)\right|\right)},
\]
which is decreasing with respect to $-\rho$. Therefore
\[
\frac{e^{-\Psi\left(u\right)}\Psi'\left(u\right)}{\left(-\rho\right)}\geq\text{const}_{k,\varepsilon}\dfrac{1}{\delta_{\Omega}\prod_{j=0}^{k-1}\log_{j}\left(\left|\log\delta_{\Omega}\right|\right)}.
\]
By the claim below, one gets that $P_{\Omega}\left(-\rho\right)=0$.
It follows that $\int_{\Omega}\rho\overline{1}=0$. This is a contradiction.

For a general $\rho\in PSH^{-}\left(\Omega\right)$, it suffices to
show that the estimate \eqref{gr1} is also true. This can be done
by a limiting process similar to that used in the proof of Theorem \ref{BeCh1}.
We will retain it for clarity. Let $\left\{ \Omega_{j}\right\} $ be a sequence of pseudoconvex domains
such that $\overline{\Omega_{j}}\Subset\Omega_{j+1}$ and $\Omega=\bigcup_{j=1}^{\infty}\Omega_{j}$.
For each $\sigma>0$, let $u_{\sigma}=-\log\left(e^{-u}\star\eta_{\sigma}\right)$,
where $u=-\log\left(-\rho\right)$, and $\star\;\eta_{\sigma}$ denotes the standard
convolution. Then $-e^{-u_{\sigma}}=\rho\star\eta_{\sigma}\in PSH^{-}\left(\Omega\right)$,
thus $i\overline{\partial}u_{\sigma}\wedge\partial u_{\sigma}\leq i\partial\overline{\partial}u_{\sigma}$.
By the monotone convergence theorem, for each $j$, we can choose
$0<\sigma_{j}<\text{dist}\left(\Omega_{j},\partial\Omega\right)$
such that
\[
\int_{\Omega_{j}}\left|f\right|^{2}\left(e^{u_{\sigma_{j}}-\Psi\left(u_{\sigma_{j}}\right)}-e^{u-\Psi\left(u\right)}\right)<\frac{1}{j}.
\]
Since $u_{\sigma_{j}}\in C^{\infty}\left(\overline{\Omega_{j}}\right)$,
\[
\displaystyle\intop_{\Omega_{j}}\left|P_{\Omega_{j}}\left(f\right)\right|^{2}\Psi'\left(u_{\sigma_{j}}\right)e^{u_{\sigma_{j}}-\Psi\left(u_{\sigma_{j}}\right)}\leq\intop_{\Omega_{j}}\left|f\right|^{2}e^{u_{\sigma_{j}}-\Psi\left(u_{\sigma_{j}}\right)}\leq\frac{1}{j}+\intop_{\Omega}\left|f\right|^{2}e^{u-\Psi\left(u\right)}.
\]
Simple calculation shows that $\left(1-\Psi'\left(t\right)\right)\Psi'\left(t\right)+\Psi"\left(t\right)\geq0$
for $t\gg1$. Thus $t\rightarrow\Psi'\left(t\right)e^{t-\Psi\left(t\right)}$
is non-decreasing. It follows that
\[
\Psi'\left(u_{\sigma_{j}}\right)e^{u_{\sigma_{j}}-\Psi\left(u_{\sigma_{j}}\right)}\geq\Psi'\left(u\right)e^{u-\Psi\left(u\right)}\geq\text{const}_{k,\varepsilon}\dfrac{1}{\delta_{\Omega}\prod_{j=0}^{k-1}\log_{j}\left(\left|\log\delta_{\Omega}\right|\right)}.
\]
Therefore, by passing to a subsequence, $P_{\Omega_{j}}\left(f\right)$
converges weakly to $P_{\Omega}\left(f\right)$ in $L_{loc}^{2}\left(\Omega\right)$.
For each $l\in\mathbb{Z}^{+}$, let $v_{l}=\min\left\{ \Psi'\left(u\right)e^{u-\Psi\left(u\right)},l\right\} \in L^{\infty}\left(\Omega\right)$.
Then 
\begin{align*}
\int_{\Omega_{j_{0}}}\left|P_{\Omega}\left(f\right)\right|^{2}v_{l} & \leq\liminf_{j\rightarrow\infty}\int_{\Omega_{j_{0}}}\left|P_{\Omega_{j}}\left(f\right)\right|^{2}v_{l}\\
 & \leq\liminf_{j\rightarrow\infty}\int_{\Omega_{j_{0}}}\left|P_{\Omega_{j}}\left(f\right)\right|^{2}\Psi'\left(u_{\sigma_{j}}\right)e^{u_{\sigma_{j}}-\Psi\left(u_{\sigma_{j}}\right)}\\
 & \leq\int_{\Omega}\left|f\right|^{2}e^{u-\Psi\left(u\right)},\;\forall\, l\in\mathbb{Z}^{+}.
\end{align*}
Now Fatou's lemma gives
\[
\int_{\Omega}\left|P_{\Omega}\left(f\right)\right|^{2}\Psi'\left(u\right)e^{u-\Psi\left(u\right)}\leq\int_{\Omega}\left|f\right|^{2}e^{u-\Psi\left(u\right)}.
\]

\vspace*{0.15cm}

\emph{Claim}: \emph{there is no holomorphic function $f$ on $\Omega$,
except for $f\equiv0$, such that}
\[
\int_{\Omega}\dfrac{\left|f\right|^{2}}{\delta_{\Omega}\prod_{j=0}^{k-1}\log_{j}\left(\left|\log\delta_{\Omega}\right|\right)}<\infty.
\]
\emph{Proof of the claim}. This essentially follows from the proof
of \cite[Theorem 1.6]{Che14}. The key is to apply the Bochner-Martinelli formula:

\textbf{Bochner-Martinelli formula}: Let $D\subset\mathbb{C}^{n}$ be a bounded
domain with $C^{1}$ boundary, and let $f\in C^{1}\left(\overline{D}\right)$.
Then for each $z\in D$,
\begin{align*}
f\left(z\right) & =\frac{\left(n-1\right)!}{\left(2\pi i\right)^{n}}\biggl(\int_{\partial D}f\left(\varsigma\right)\sum_{j=1}^{n}\frac{\left(-1\right)^{j+1}\left(\overline{\varsigma}_{j}-\overline{z}_{j}\right)}{\left|\varsigma-z\right|^{2n}}\widehat{d\overline{\varsigma}_{j}}\wedge d\varsigma\\
& \hspace*{4.5cm} -\int_{D}\sum_{j=1}^{n}\left(\overline{\varsigma}_{j}-\overline{z}_{j}\right)\frac{\partial f}{\partial\overline{\varsigma}_{j}}\left(\varsigma\right)\frac{d\overline{\varsigma}\wedge d\varsigma}{\left|\varsigma-z\right|^{2n}}  \biggr).
\end{align*}
Here $\widehat{d\overline{\varsigma}_{j}}=d\overline{\varsigma}_{1}\wedge\ldots\wedge d\overline{\varsigma}_{j-1}\wedge d\overline{\varsigma}_{j+1}\wedge\ldots\wedge d\overline{\varsigma}_{n}$.

Since $$\left|\delta_{\Omega}\left(z\right)-\delta_{\Omega}\left(w\right)\right|\leq\left|z-w\right|,\forall z,w\in \mathbb{C}^{n},$$
one has $\left\Vert \bigtriangledown\delta_{\varepsilon}\right\Vert _{L^{\infty}\left(\Omega\right)}\leq\text{const}_{n}$.
Here $\delta_{\varepsilon}:=\delta_{\Omega}\star\eta_{\varepsilon}$
is the standard convolution. Fix any $z_{0}\in\Omega$. For each $\varepsilon>0$,
choose $\varepsilon_{1}>0$ such that $\left\Vert \delta_{\Omega}-\delta_{\varepsilon_{1}}\right\Vert _{L^{\infty}\left(\Omega\right)}\leq\left.\varepsilon\right/10.$
It is clear that
\begin{align*}
\left\{ z\in\Omega:\log\frac{1}{2}<\log_{k}\left(\left|\log\delta_{\varepsilon_{1}}\left(z\right)\right|\right)-\log_{k}\left(\left|\log\varepsilon\right|\right)<0\right\} \\
\subset\left\{ z\in\Omega:\frac{9}{10}\varepsilon<\delta_{\Omega}\left(z\right)<c_{\varepsilon}\right\} ,
\end{align*}
for some constant $c_{\varepsilon}$ that converges to $0$ as $\varepsilon\rightarrow0$.
And also $\frac{8}{9}\delta_{\Omega}\left(z\right)<\delta_{\varepsilon_{1}}\left(z\right)<\frac{10}{9}\delta_{\Omega}\left(z\right)$,
provided that $\frac{9}{10}\varepsilon<\delta_{\Omega}\left(z\right)$.
Choose a cut-off function $\chi$ on $\mathbb{R}$ such that $\chi\left(t\right)=1$
for $t<\log\frac{1}{2}$ and $\chi\left(t\right)=0$ for $t>0$. For each $\varepsilon>0$ (small), consider a smooth domain $D_{\varepsilon}\Subset\Omega$
so that $\delta_{\Omega}\left(z\right)<\frac{1}{10}\varepsilon$,
for any $z\in\partial D_{\varepsilon}$. Then apply the Bochner-Martinelli
formula to the domain $D_{\varepsilon}$ and the function
\[
g_{\varepsilon}\left(z\right)=\chi\left(\log_{k}\left(\left|\log\delta_{\varepsilon_{1}}\left(z\right)\right|\right)-\log_{k}\left(\left|\log\varepsilon\right|\right)\right)f^{2}\left(z\right).
\]
Since $g_{\varepsilon}=0$ on $\partial D_{\varepsilon}$, and $f$
is holomorphic on $\Omega$, we get that
\begin{align*}
\left|f\left(z_{0}\right)\right|^{2} &\leq\text{c}_{n}\intop_{\Omega}\hspace*{-0.1cm}\dfrac{\left|f\right|^{2}\left(\varsigma\right)\left|\chi'\left(\cdot\right)\right|}{\delta_{\varepsilon_{1}}\left(\varsigma\right)\displaystyle\prod_{j=0}^{k-1}\log_{j}\left(\left|\log\delta_{\varepsilon_{1}}\left(\varsigma\right)\right|\right)}\left|\sum_{j=1}^{n}\left(\overline{\varsigma}_{j}-\overline{z_{0}}_{j}\right)\frac{\partial\delta_{\varepsilon_{1}}}{\partial\overline{\varsigma}_{j}}\left(\varsigma\right)\right|\frac{d\overline{\varsigma}\wedge d\varsigma}{\left|\varsigma-z_{0}\right|^{2n}}\\
 & \leq\text{c}_{n,z_{0}}\;\int_{\left\{ \frac{9}{10}\varepsilon<\delta_{\Omega}\left(\cdot\right)<c_{\varepsilon}\right\} }\dfrac{\left|f\right|^{2}\left(\varsigma\right)}{\delta_{\Omega}\left(\varsigma\right)\prod_{j=0}^{k-1}\log_{j}\left(\left|\log\delta_{\Omega}\left(\varsigma\right)\right|\right)}d\overline{\varsigma}\wedge d\varsigma\\
 & \rightarrow0\text{ as }\varepsilon\rightarrow0,
\end{align*}
given that

\[
\int_{\Omega}\dfrac{\left|f\right|^{2}}{\delta_{\Omega}\prod_{j=0}^{k-1}\log_{j}\left(\left|\log\delta_{\Omega}\right|\right)}<\infty.
\]
This ends the proof of the claim, and so the proof of Theorem \ref{cor1} is complete.

\end{proof}

\section{Proof of Theorem \ref{bouBe}} \label{coap2}

\begin{proof}
Again, we may assume that $\rho\in C^{2}\left(\Omega\right)$. By modifying $\rho$ on the region where $-\rho$ is large, we may also 
assume without loss of generality that $\left\Vert \rho\right\Vert _{L^{\infty}\left(\Omega\right)}$
is small. Let $$\varphi=0 \;\text{ and }\;\psi=-\log\left(-\rho+\varepsilon\right)-\Psi\left(-\log\left(-\rho+\varepsilon\right)\right),$$
where
\[
\Psi\left(t\right)=\frac{\alpha-1}{2}\underbrace{\log\left(\log\ldots\log t\right)}_{k\;\text{times}}:=\frac{\alpha-1}{2}\log_{k}\left(t\right).
\]
Theorem \ref{BeCh1} gives 
\[
\intop_{\Omega}\frac{\left(\alpha-1\right)\left|P_{\Omega}\left(f\right)\right|^{2}}{2\left(-\rho+\varepsilon\right)\left(\displaystyle\prod_{j=0}^{k-2}\log_{j}\left(u\right)\right)\left(\log_{k-1}\left(u\right)\right)^{\frac{\alpha+1}{2}}}\leq\intop_{\Omega}\dfrac{\left|f\right|^{2}}{\left(-\rho+\varepsilon\right)\left(\log_{k-1}\left(u\right)\right)^{\frac{\alpha-1}{2}}},
\]
where $u:=-\log\left(-\rho+\varepsilon\right)$. For a fixed $w\in\Omega$,
apply $f\left(\cdot\right)=\chi_{W}K_{W}\left(\cdot,w\right)$; where
$\chi_{W}$ is the indicator function of $W:=\left\{ -\rho>\frac{-\rho\left(w\right)}{2}\right\} $,
and $K_{W}$ is the Bergman kernel of $W$, we get that
\[
\intop_{\Omega}\frac{\left(\alpha-1\right)\left|K_{\Omega}\left(\cdot,w\right)\right|^{2}}{2\left(-\rho+\varepsilon\right)\left(\displaystyle\prod_{j=0}^{k-2}\log_{j}\left(u\right)\right)\left(\log_{k-1}\left(u\right)\right)^{\frac{\alpha+1}{2}}}\leq\intop_{W}\dfrac{\left|K_{W}\left(\cdot,w\right)\right|^{2}}{\left(-\rho+\varepsilon\right)\left(\log_{k-1}\left(u\right)\right)^{\frac{\alpha-1}{2}}}.
\]
Since $\alpha>1$ and 
\[
\int_{W}\dfrac{\left|K_{W}\left(\cdot,w\right)\right|^{2}}{\left(-\rho+\varepsilon\right)\left(\log_{k-1}\left(u\right)\right)^{\frac{\alpha-1}{2}}}<c\left(w\right),\forall\varepsilon>0,
\]
the conclusion follows.

\end{proof}

\section{Proof of Theorem \ref{thm_int}} \label{int_imp}

We first recall some properties of the pluricomplex Green function. 

Let $\Omega$ be a bounded domain in $\mathbb{C}^{n}$. The pluricomplex
Green function $g_{\Omega}\left(\cdot,w\right)$ with pole at $w\in\Omega$
is defined by 
\[
g_{\Omega}\left(z,w\right):=\sup\left\{ u\left(z\right):u\in PSH^{-}\left(\Omega\right),\limsup_{\xi\rightarrow w}\left(u\left(\xi\right)-\log\left|\xi-w\right|\right)<\infty\right\} .
\]
It is known that (see \cite{Blo05}) when $\Omega$ is pseudoconvex
and $f$ is holomorphic on $\Omega$, 
\begin{equation}
\left|f\left(w\right)\right|^{2}\leq\text{const}_{n}\,K_{\Omega}\left(w,w\right)\int_{\left\{ g_{\Omega}\left(\cdot,w\right)<-1\right\} }\left|f\right|^{2},\;\forall w\in\Omega.\label{eq:mn1}
\end{equation}
We also need the following well-known estimate of $g_{\Omega}$:

\begin{theorem}
\label{thm:mn2}\emph{(B{\l}ocki \cite{Blo05})} Let $\Omega$ be a bounded
domain in $\mathbb{C}^{n}$. Assume that there is $v\in PSH^{-}\left(\Omega\right)$
such that 
\[
C_{1}\delta_{\Omega}^{t}\left(z\right)\leq-v\left(z\right)\leq C_{2}\delta_{\Omega}^{t}\left(z\right),\quad z\in\Omega,
\]
for some positive constants $C_{1},C_{2}$ and $t$. Then there exist
positive constants $C$ and $\delta_{0}$ such that
\[
\left\{ g_{\Omega}\left(\cdot,w\right)<-1\right\}  \hspace*{-0.05cm} \subset  \hspace*{-0.05cm} \left\{ \hspace*{-0.05cm} \frac{1}{C}\delta_{\Omega}\left(w\right)\left|\log\delta_{\Omega}\left(w\right)\right|^{-\frac{1}{t}}\leq\delta_{\Omega}\left(\cdot\right)\leq C\delta_{\Omega}\left(w\right)\left|\log\delta_{\Omega}\left(w\right)\right|^{\frac{n}{t}} \hspace*{-0.05cm}\right\}  \hspace*{-0.05cm},
\]
for any $w\in\Omega$ with $\delta_{\Omega}\left(w\right)<\delta_{0}$.
\end{theorem}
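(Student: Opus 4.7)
The plan is to establish both half-inclusions by a contrapositive argument: for any $z\in\Omega$ whose $\delta_\Omega(z)$ lies outside the stated interval, we will produce a plurisubharmonic candidate $u\in PSH^-(\Omega)$ with $u(\xi)-\log|\xi-w|$ bounded above near $w$ and $u(z)\ge -1$; by the envelope definition this yields $g_\Omega(z,w)\ge u(z)\ge -1$, excluding $z$ from the sublevel set $\{g_\Omega(\cdot,w)<-1\}$. The basic building blocks of the competitors are the log-pole function $\log(|\xi-w|/R)$ with $R=\mathrm{diam}(\Omega)$, and rescaled versions of the hyperconvex exhaustion $v$, whose size is tied to $\delta_\Omega$ via the two-sided comparison in the hypothesis.

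Step 1 (the lower bound $\delta_\Omega(z)\ge (1/C)\delta_\Omega(w)|\log\delta_\Omega(w)|^{-1/t}$, ruling out $z$ too close to $\partial\Omega$): The triangle inequality already handles $|z-w|\le\delta_\Omega(w)/2$, where $\delta_\Omega(z)\ge\delta_\Omega(w)/2$. For $|z-w|\ge\delta_\Omega(w)/2$, the competitor is built by combining $\log(|\xi-w|/R)$ with a carefully scaled multiple of $v$, using $|v(w)|\asymp\delta_\Omega(w)^t$ to normalise. The upper half of the assumption, $-v(z)\le C_2\delta_\Omega(z)^t$, forces the $v$-contribution to be small at $z$ when $z$ is close to $\partial\Omega$, while the lower half $-v\ge C_1\delta_\Omega^t$ controls $|v(w)|$ from below. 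The unavoidable logarithmic cost $|\log(|z-w|/R)|\lesssim |\log\delta_\Omega(w)|$ of the pole branch in the regime $|z-w|\sim\delta_\Omega(w)$ must be absorbed by the boundary barrier, and solving the resulting balance inequality $\mu(\delta_\Omega(z)/\delta_\Omega(w))^t+|\log\delta_\Omega(w)|\lesssim 1$ for the tuning parameter $\mu$ is exactly what produces the exponent $-1/t$ and the logarithmic loss.

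Step 2 (the upper bound $\delta_\Omega(z)\le C\delta_\Omega(w)|\log\delta_\Omega(w)|^{n/t}$, ruling out $z$ too far from the boundary): We use the sub-mean-value property of $g_\Omega(\cdot,w)$,
\[
-g_\Omega(z,w)\le \frac{c_n}{\delta_\Omega(z)^{2n}}\int_{B(z,\delta_\Omega(z)/2)}|g_\Omega(\xi,w)|\,dV(\xi),
\]
and control the integrand by combining (i) the comparison $g_\Omega(\xi,w)\ge\log(|\xi-w|/R)$ (from monotonicity under enlarging $\Omega$ to a circumscribing ball) and (ii) the sharper $v$-based lower bound of Step 1. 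Splitting the domain of integration at the scale $r_0\sim\delta_\Omega(w)|\log\delta_\Omega(w)|^{n/t}$, the near piece contributes $\int_{|\xi-w|\le r_0}|\log(|\xi-w|/R)|\,dV\lesssim r_0^{2n}|\log\delta_\Omega(w)|$, while on the far piece the Step 1 estimates give $|g_\Omega(\xi,w)|\lesssim 1$; inserting the total in the submean inequality and imposing $-g_\Omega(z,w)>1$ forces $\delta_\Omega(z)^{2n}\lesssim r_0^{2n}|\log\delta_\Omega(w)|$, which is equivalent to the claimed upper bound after adjusting $C$.

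The principal obstacle is the Step 1 construction: one must simultaneously (i) preserve the unit-coefficient log pole at $w$, (ii) keep the candidate non-positive throughout $\Omega$, and (iii) force $u(z)\ge -1$ even though the pole branch contributes a term of order $-|\log\delta_\Omega(w)|$ at the relevant $z$. Arranging these three properties with a single PSH function requires a delicate choice of weights indexed by $|v(w)|^{-1}$, and it is exactly this balancing that injects the logarithmic factors $|\log\delta_\Omega(w)|^{-1/t}$ on the inner side and, via the volume contribution in Step 2, $|\log\delta_\Omega(w)|^{n/t}$ on the outer side.
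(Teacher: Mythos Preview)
The paper does not supply its own proof of this theorem: it is quoted from B{\l}ocki \cite{Blo05} and used as a black box in Sections~\ref{int_imp} and in the proof of Theorem~\ref{thm1}. So there is no in-paper argument to compare against; I assess your sketch on its own terms.

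Your Step~1 is on the right track. The lower bound on $\delta_\Omega(z)$ does come from a competitor built out of $\log(|\xi-w|/R)$ and a rescaled $v$, and the exponent $-1/t$ arises from matching $\lambda|v(w)|\asymp|\log\delta_\Omega(w)|$. (The precise gluing---taking the log-pole on a ball $B(w,r)$ with $r\sim\delta_\Omega(w)$, the function $\lambda v$ outside, and the maximum in between---needs to be spelled out so that the candidate really has a unit log pole; a naive max or sum on all of $\Omega$ does not.) Your ``balance inequality'' $\mu(\delta_\Omega(z)/\delta_\Omega(w))^{t}+|\log\delta_\Omega(w)|\lesssim1$ is not the correct relation (the large term $|\log\delta_\Omega(w)|$ cannot sit additively inside a bounded quantity), but the scheme is right.

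Step~2, however, contains a genuine error. The function $g_\Omega(\cdot,w)$ is plurisubharmonic, hence subharmonic, so the sub-mean-value inequality reads
\[
g_\Omega(z,w)\ \le\ \frac{1}{|B(z,r)|}\int_{B(z,r)} g_\Omega(\xi,w)\,dV(\xi),
\]
which for a negative function gives
\[
-g_\Omega(z,w)\ \ge\ \frac{1}{|B(z,r)|}\int_{B(z,r)} |g_\Omega(\xi,w)|\,dV(\xi),
\]
the \emph{reverse} of the inequality you wrote. In other words, averaging a negative subharmonic function gives an \emph{upper} bound on the average, not on the central value, so your displayed inequality cannot be used to force $\delta_\Omega(z)^{2n}$ to be small. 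The subsequent splitting into near/far pieces therefore has nothing to feed into, and the conclusion $\delta_\Omega(z)\le C\delta_\Omega(w)|\log\delta_\Omega(w)|^{n/t}$ is not established. B{\l}ocki's own argument for the upper bound is different and does not proceed via a sub-mean estimate on $|g_\Omega|$; you will need another mechanism (for instance, the pointwise lower bound $g_\Omega(\xi,w)\ge \lambda v(\xi)$ obtained from the Step~1 competitor, combined with a suitable integral or capacity estimate) to control $\delta_\Omega(z)$ from above.
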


We now give the proof of Theorem \ref{thm_int}. The key to gaining the exponent in \eqref{improChen}, compared to the estimate \eqref{Chen1}, is the fact that we can exploit the smoothness of $\partial{\Omega}$ on the Bergman kernel by using the estimate \eqref{eq:mn1}.

\begin{proof}[Proof of Theorem \ref{thm_int}]
For each $t<\alpha\left(\Omega\right)$, by definition, there is $\rho\in PSH^{-}\left(\Omega\right)$
such that $-\rho\leq C\delta_{\Omega}^{t}$ on $\Omega$. For any
(fixed) $w\in\Omega$ and every $0<r<1$, by Theorem \ref{thm:mn2}
and the estimate \eqref{eq:mn1}, we conclude that for any $z\in\Omega$
with $\delta_{\Omega}\left(z\right)<\delta_{0}$,
\begin{align*}
\left|K_{\Omega}\left(z,w\right)\right|^{2} & \leq\text{c}_{n}\;K_{\Omega}\left(z,z\right)\int_{\left\{ g_{\Omega}\left(\cdot,z\right)<-1\right\} }\left|K_{\Omega}\left(\cdot,w\right)\right|^{2}\\
 & \leq\text{c}_{\Omega}\;\delta_{\Omega}^{-n-1}\left(z\right)\left(\delta_{\Omega}\left(z\right)\left|\log\delta_{\Omega}\left(z\right)\right|^{\frac{n}{c}}\right)^{tr}\hspace*{-0.4cm} \displaystyle\intop_{\left\{ g_{\Omega}\left(\cdot,z\right)<-1\right\} }\hspace*{-0.3cm} \left|K_{\Omega}\left(\cdot,w\right)\right|^{2}\left(-\rho\right)^{-r}\\
 & \leq\text{c}_{\Omega}\;\delta_{\Omega}^{-n-1+tr}\left(z\right)\left|\log\delta_{\Omega}\left(z\right)\right|^{\frac{ntr}{c}}\left(-\rho\left(w\right)\right)^{-r}K_{\left\{ -\rho>\frac{-\rho\left(w\right)}{2}\right\} }\left(w,w\right).
\end{align*}
Here we have used the fact that there are $v\in PSH^{-}\left(\Omega\right)$
and $c>0$ such that $C_{1}\delta_{\Omega}^{c}\leq-v\leq C_{2}\delta_{\Omega}^{c}$
on $\Omega$. The last inequality above follows by plugging $f\left(\cdot\right)=\chi_{\left\{ -\rho>\frac{-\rho\left(w\right)}{2}\right\} }\left(\cdot\right)K_{\left\{ -\rho>\frac{-\rho\left(w\right)}{2}\right\} }\left(\cdot,w\right)$
into the estimate (in which we apply Theorem \ref{BeCh1} to $\psi=-r\log\left(-\rho\right)$
and $\varphi=0$; remark also that this is true for a general plurisubharmonic function $\rho$)
\[
\int_{\Omega}\left|P_{\Omega}\left(f\right)\right|^{2}\left(-\rho\right)^{-r}\leq\frac{1}{1-r}\int_{\Omega}\left|f\right|^{2}\left(-\rho\right)^{-r}.
\]
Thus 
\[
\left|K_{\Omega}\left(z,w\right)\right|^{2}\leq\text{const}_{w}\;\delta_{\Omega}^{-n-1+tr}\left(z\right)\left|\log\delta_{\Omega}\left(z\right)\right|^{\frac{ntr}{c}},\;\forall z\in\Omega,\text{ with }\delta_{\Omega}\left(z\right)<\delta_{0}.
\]
On the other hand, for any $z\in\Omega$ with $\delta_{\Omega}\left(z\right)\geq\delta_{0}$,
\begin{align*}
\left|K_{\Omega}\left(z,w\right)\right|^{2} & \leq\text{c}_{n}\;K_{\Omega}\left(z,z\right)\int_{\left\{ g_{\Omega}\left(\cdot,z\right)<-1\right\} }\left|K_{\Omega}\left(\cdot,w\right)\right|^{2}\\
 & \leq\text{c}_{\Omega}\;\delta_{0}^{-n-1}K_{\Omega}\left(w,w\right):=\text{const}_{w}.
\end{align*}
Similarly, for each $\varepsilon>0$,
\begin{align*}
\int_{\left\{ \delta_{\Omega}\leq\varepsilon\right\} }\left|K_{\Omega}\left(\cdot,w\right)\right|^{2} & \leq\int_{\left\{ -\rho\leq C\varepsilon^{t}\right\} }\left|K_{\Omega}\left(\cdot,w\right)\right|^{2}\\
 & \leq\text{const}\;\varepsilon^{tr}\int_{\left\{ -\rho\leq C\varepsilon^{t}\right\} }\left|K_{\Omega}\left(\cdot,w\right)\right|^{2}\left(-\rho\right)^{-r}\\
 & \leq\text{const}_{w}\;\varepsilon^{tr}.
\end{align*}
Therefore, for any $\beta\geq2$, we get that
\begin{align*}{\displaystyle \;\intop_{\Omega}\left|K_{\Omega}\left(\cdot,w\right)\right|^{\beta}}\\
 & \hspace*{-1.6cm} \leq{\displaystyle\; \intop_{\left\{ \delta_{\Omega}\geq\delta_{0}\right\} }\left|K_{\Omega}\left(\cdot,w\right)\right|^{\beta}+\sum_{j=0}^{\infty}\;{\displaystyle \intop_{\left\{ 2^{-j-1}\delta_{0}\leq\delta_{\Omega}<2^{-j}\delta_{0}\right\} }\left|K_{\Omega}\left(\cdot,w\right)\right|^{\beta}}}\\
 &  \hspace*{-1.6cm} \leq\text{c}_{w}\left(1+\sum_{j=0}^{\infty}2^{\left(j+1\right)\left(n+1-tr\right)\left(\frac{\beta-2}{2}\right)}\left|\log\left(2^{-j-1}\delta_{0}\right)\right|^{\frac{ntr\left(\beta-2\right)}{2c}}\hspace*{-0.4cm}{\displaystyle \intop_{\left\{ \delta_{\Omega}<2^{-j}\delta_{0}\right\} }\hspace*{-0.5cm}\left|K_{\Omega}\left(\cdot,w\right)\right|^{2}}\right)\\
 &  \hspace*{-1.6cm} \leq\text{c}_{w}\left(1+\sum_{j=0}^{\infty}2^{j\left(\left(n+1-tr\right)\left(\frac{\beta-2}{2}\right)-tr\right)}\left|\log\left(2^{-j-1}\delta_{0}\right)\right|^{\frac{ntr\left(\beta-2\right)}{2c}}\right).
\end{align*}
It follows that if $\left(n+1-tr\right)\left(\frac{\beta-2}{2}\right)-tr<0$
then $K_{\Omega}\left(\cdot,w\right)\in L^{\beta}\left(\Omega\right)$.
This condition is equivalent to 
\[
\beta<2+\dfrac{2tr}{n+1-tr}.
\]
Since this is true for any $t<\alpha\left(\Omega\right)$ and $0<r<1$,
the desired estimate follows.

\end{proof}

A similar approach can be used to obtain the following low-level $L^{p}$
regularity of the Bergman projection, which improves the exponent in \cite[Corollary 2.5]{Thu18}, see also
\cite[Lemma 2.2]{ChZe17}.

\begin{proposition} \label{cor3}
Let $\Omega$ be a bounded pseudoconvex domain with $C^{2}$ boundary
in $\mathbb{C}^{n}$. For any
\[
2\leq q<2+\frac{2\alpha\left(\Omega\right)}{n+1-\alpha\left(\Omega\right)},
\]
and
\[
p>\frac{2}{1-\left(n+1\right)\left(1-\frac{2}{q}\right)},
\]
the Bergman projection $P_{\Omega}$ is bounded from $L^{p}\left(\Omega\right)$
to $L^{q}\left(\Omega\right)$.

\end{proposition}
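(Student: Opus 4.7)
The plan is to run the same machinery underlying Theorem \ref{thm_int}, but this time tracking the joint $z,w$-dependence of the Bergman kernel estimates so that they can be converted into an $L^p\to L^q$ mapping property of $P_\Omega$. I would start from the integral representation $P_\Omega f(z)=\int_\Omega K_\Omega(z,w)f(w)\,dw$ and apply H\"older's inequality with dual exponents $p,p'$ to get $|P_\Omega f(z)|\le\|K_\Omega(z,\cdot)\|_{L^{p'}(\Omega)}\|f\|_{L^p(\Omega)}$. Raising to the $q$-th power and integrating over $z$, the whole problem reduces to showing that
\[
\int_\Omega\|K_\Omega(z,\cdot)\|_{L^{p'}(\Omega)}^{q}\,dz<\infty
\]
under the stated conditions on $p$ and $q$.

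The second step is to estimate $\|K_\Omega(z,\cdot)\|_{L^{p'}}$ as an explicit power of $\delta_\Omega(z)$. By the Hermitian symmetry $|K_\Omega(z,w)|=|K_\Omega(w,z)|$, the pointwise bound $|K_\Omega(z,w)|^{2}\le c_z\,\delta_\Omega(w)^{-n-1+tr}|\log\delta_\Omega(w)|^{ntr/c}$ and the $L^{2}$-mass control $\int_{\{\delta_\Omega(w)\le\varepsilon\}}|K_\Omega(z,w)|^{2}\,dw\le c_z\,\varepsilon^{tr}$, both inherited from the proof of Theorem \ref{thm_int} with $z$ and $w$ interchanged, can be combined through a dyadic-shell decomposition of $\Omega$ in the $w$-variable. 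Exactly as there, the resulting geometric series converges whenever $p'<2+2tr/(n+1-tr)$, producing an estimate of the shape $\|K_\Omega(z,\cdot)\|_{L^{p'}}^{p'}\le C\,c_z^{p'/2}$. The $C^{2}$ boundary hypothesis then enters via the diagonal estimate $K_\Omega(z,z)\le C\delta_\Omega(z)^{-n-1}$ and, through a sufficiently regular choice of the plurisubharmonic weight $\rho$ coming from the hyperconvexity index, via the comparability of $\delta_{W_z}(z)$ with $\delta_\Omega(z)$; together these yield $c_z\le C\delta_\Omega(z)^{-\lambda}$ for an exponent $\lambda=\lambda(t,r,n)$ read off from the dyadic sum.

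The final step is bookkeeping: plugging $\|K_\Omega(z,\cdot)\|_{L^{p'}}^{q}\le C\delta_\Omega(z)^{-\lambda q/2}$ into the outer integral and demanding convergence of $\int_\Omega\delta_\Omega(z)^{-\lambda q/2}\,dz$ yields the threshold $\tfrac{1}{p}+\tfrac{n}{2}<\tfrac{n+1}{q}$ after tracing the exponents and letting $t\to\alpha(\Omega)$, $r\to1$; this is precisely the stated condition $p>2/(1-(n+1)(1-2/q))$. The hard part will be the quantitative control of $c_z=(-\rho(z))^{-r}K_{W_z}(z,z)$ as a function of $\delta_\Omega(z)$: in the proof of Theorem \ref{thm_int} only the finiteness of this quantity for each fixed $z$ was needed, whereas here a sharp power of $\delta_\Omega(z)$ has to be extracted. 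This is where the $C^{2}$ boundary hypothesis plays an essential role, and it is also the step through which the improvement from $2n-\alpha$ to $n+1-\alpha$ in Theorem \ref{thm_int} propagates to the $L^p$-to-$L^q$ exponents.
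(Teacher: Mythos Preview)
Your route differs from the paper's and carries a genuine gap. The paper does \emph{not} go through kernel estimates in the $w$-variable; it applies the B\l ocki inequality \eqref{eq:mn1} directly to the holomorphic function $P_\Omega(f)$:
\[
|P_\Omega(f)(z)|^{2}\le c_n\,K_\Omega(z,z)\int_{\{g_\Omega(\cdot,z)<-1\}}|P_\Omega(f)|^{2}.
\]
Running the Theorem~\ref{thm_int} argument with $P_\Omega(f)$ in place of $K_\Omega(\cdot,w)$ produces the pointwise bound $|P_\Omega(f)(z)|^{2}\le C\,\delta_\Omega(z)^{-n-1+tr}|\log\delta_\Omega(z)|^{ntr/c}\,M$ and the mass bound $\int_{\{\delta_\Omega\le\varepsilon\}}|P_\Omega(f)|^{2}\le C\varepsilon^{tr}M$, where $M=\int_\Omega|P_\Omega(f)|^{2}(-\rho)^{-r}$ is a \emph{single global constant} independent of $z$. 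The dyadic sum then converges exactly as before and gives $\|P_\Omega(f)\|_{L^q}\le CM^{1/2}$; one finishes by bounding $M\le(1-r)^{-1}\int_\Omega|f|^{2}(-\rho)^{-r}$ in terms of $\|f\|_{L^p}$. The whole point is that the role of the pole-dependent constant $c_w$ is taken over by $M$, so there is nothing $z$-dependent to track.

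Your plan, by contrast, needs $c_z=(-\rho(z))^{-r}K_{W_z}(z,z)\le C\delta_\Omega(z)^{-\lambda}$ with $\lambda q/2<1$. This is not attainable. Since $W_z=\{-\rho>-\rho(z)/2\}\subset\Omega$, one always has $K_{W_z}(z,z)\ge K_\Omega(z,z)$, and on a bounded pseudoconvex domain with $C^{2}$ boundary $K_\Omega(z,z)\gtrsim\delta_\Omega(z)^{-2}$; hence $c_z\gtrsim\delta_\Omega(z)^{-2}$ regardless of how regular $\rho$ is, and $\int_\Omega c_z^{q/2}\ge c\int_\Omega\delta_\Omega^{-q}=\infty$ for every $q\ge1$. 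The appeal to ``a sufficiently regular choice of $\rho$'' does not help: the obstruction is the size of $K_{W_z}(z,z)$, not the shape of the level sets. More fundamentally, the initial H\"older step $|P_\Omega f(z)|\le\|K_\Omega(z,\cdot)\|_{L^{p'}}\|f\|_{L^p}$ already discards too much information; even a sharp evaluation of $\|K_\Omega(z,\cdot)\|_{L^{p'}}$ (say on the ball) and of $\int_\Omega\|K_\Omega(z,\cdot)\|_{L^{p'}}^{q}\,dz$ does not reproduce the threshold $p>2/(1-(n+1)(1-2/q))$, so the ``bookkeeping'' step cannot land where you claim.
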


It is interesting to know how large the $L^{q}$-range of the output
space can be, provided the $L^{p}$-regularity of the input is good enough.
The result in \cite{EdMcN17} indicates that for pseudoconvex domains
with irregular boundary, $q=2$ might be optimal, even if $p=\infty$.

\begin{proof}
The argument is almost identical to that in Theorem \ref{thm_int}, that is, instead of using the estimate 
\[
\left|K_{\Omega}\left(z,w\right)\right|^{2}\leq\text{const}_{n}\;K_{\Omega}\left(z,z\right)\int_{\left\{ g_{\Omega}\left(\cdot,z\right)<-1\right\} }\left|K_{\Omega}\left(\cdot,w\right)\right|^{2},
\]
we consider the estimate
\[
\left|P_{\Omega}\left(f\right)\left(z\right)\right|^{2}\leq\text{const}_{n}\;K_{\Omega}\left(z,z\right)\int_{\left\{ g_{\Omega}\left(\cdot,z\right)<-1\right\} }\left|P_{\Omega}\left(f\right)\right|^{2}.
\]
We leave the details to the reader, see also the proof of \cite[Corollary 2.5]{Thu18}.

\end{proof}

\section{Proof of Theorem \ref{thm1}}

We first prove the following modified version of \cite[Theorem 1.2]{Herb13},  which is perhaps of independent interest in future applications.

\begin{theorem} \label{lem2}
Let $\Omega$ be a bounded domain in $\mathbb{C}^{n}$ with smooth
boundary. Let $\delta\in C^{\infty}\left(\overline{\Omega}\right)$
be a function that equals $\delta_{\Omega}$ near $\partial\Omega$.
For any $\alpha > -1$, any $k\in\mathbb{Z}^{+}$ and every $g\in C^{\infty}\left(\overline{\Omega}\right)$,
there exists a function $\omega\in L^{\infty}\left(\Omega\right)$
such that
\[
\int_{\Omega}hg\delta^{\alpha}=\int_{\Omega}h\omega\delta^{k},
\]
for any holomorphic function $h$ on $\Omega$ satisfying $h\delta^{\alpha}\in L^{1}\left(\Omega\right)$.

\end{theorem}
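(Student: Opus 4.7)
The plan is to iteratively trade powers of $\delta$ via integration by parts against an antiholomorphic vector field, exploiting $\overline{\partial}h\equiv 0$ to absorb the $\overline{\partial}$'s that appear. Each step raises the power of $\delta$ by one, at the cost of replacing the smooth coefficient by a new smooth coefficient; after finitely many steps we reach $\delta^{k}$.

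Fix a cutoff $\chi\in C^{\infty}(\overline{\Omega})$ with $\chi\equiv 1$ in a neighborhood of $\partial\Omega$ and supported in a thinner neighborhood on which $\delta=\delta_{\Omega}$ and $|\partial\delta|\geq c>0$. Split
\[
\int_{\Omega}hg\delta^{\alpha}=\int_{\Omega}hg(1-\chi)\delta^{\alpha}+\int_{\Omega}hg\chi\delta^{\alpha}.
\]
The first summand is already of the claimed form: $(1-\chi)$ vanishes near $\partial\Omega$, so $g(1-\chi)\delta^{\alpha-k}\in L^{\infty}(\Omega)$. For the second summand introduce the antiholomorphic vector field $X=\sum_{j}X_{j}\partial/\partial\bar{z}_{j}$ with $X_{j}=(\partial\delta/\partial z_{j})/|\partial\delta|^{2}$, which is smooth on $\mathrm{supp}\,\chi$ and satisfies $X\delta\equiv 1$ there. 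Since $\alpha>-1$, one may write $\delta^{\alpha}=\frac{1}{\alpha+1}X(\delta^{\alpha+1})$ on $\mathrm{supp}\,\chi$, and integrating by parts while using $\overline{\partial}h=0$ yields (formally)
\[
\int_{\Omega}hg\chi\delta^{\alpha}=-\frac{1}{\alpha+1}\sum_{j}\int_{\Omega}h\,\delta^{\alpha+1}\,\partial_{\bar{z}_{j}}(g\chi X_{j})=\int_{\Omega}hg_{1}\delta^{\alpha+1},
\]
with $g_{1}\in C_{c}^{\infty}(\mathrm{supp}\,\chi)$. Iterate: at step $\ell\geq 1$ we have $\int hg_{\ell-1}\delta^{\alpha+\ell-1}=\int hg_{\ell}\delta^{\alpha+\ell}$, with $g_{\ell}\in C_{c}^{\infty}(\mathrm{supp}\,\chi)$, since $X_{j}$ remains smooth on the ever-shrinking supports. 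After $m$ steps with $\alpha+m\geq k$, factor $\delta^{\alpha+m}=\delta^{\alpha+m-k}\cdot\delta^{k}$ with $\delta^{\alpha+m-k}\in L^{\infty}(\overline{\Omega})$. The desired $\omega\in L^{\infty}(\Omega)$ is then the sum of $g(1-\chi)\delta^{\alpha-k}$ and $g_{m}\delta^{\alpha+m-k}$.

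The main obstacle is justifying the integration by parts for $h$ that is merely holomorphic in $\Omega$ with $h\delta^{\alpha}\in L^{1}$ (in particular not necessarily bounded near $\partial\Omega$). The strategy is to carry out each Stokes computation on the smooth subdomain $\Omega_{\varepsilon}=\{\delta>\varepsilon\}$ and then pass to the limit along a carefully chosen sequence. At the $\ell$-th iteration the resulting boundary term on $\partial\Omega_{\varepsilon}$ is bounded by $C\varepsilon^{\alpha+\ell}\int_{\partial\Omega_{\varepsilon}}|h|\,dS$. The coarea formula (applicable near $\partial\Omega$ where $|\nabla\delta|\equiv 1$) combined with the hypothesis $\int_{\Omega}|h|\delta^{\alpha}<\infty$ shows that $\varepsilon\mapsto\varepsilon^{\alpha}\int_{\partial\Omega_{\varepsilon}}|h|\,dS$ is integrable on $(0,\varepsilon_{0})$, hence there is a sequence $\varepsilon_{j}\downarrow 0$ along which $\varepsilon_{j}^{\alpha+1}\int_{\partial\Omega_{\varepsilon_{j}}}|h|\,dS\to 0$, and \emph{a fortiori} the boundary term at every iteration level $\ell\geq 1$ vanishes along this subsequence. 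The interior integrals all converge absolutely because $\delta\in L^{\infty}(\overline{\Omega})$ gives $|h|\delta^{\alpha+\ell}\leq C|h|\delta^{\alpha}\in L^{1}(\Omega)$ for each $\ell\geq 0$, so the formal manipulation becomes rigorous in the limit.
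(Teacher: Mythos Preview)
Your argument is correct and follows the same overall strategy as the paper (iterated integration by parts \`a la Herbig to raise the power of $\delta$ one unit at a time), but you make two technical choices that streamline the computation.

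First, you integrate by parts against the $(0,1)$--vector field $X=\sum_j |\partial\delta|^{-2}\,\delta_{z_j}\,\partial_{\bar z_j}$, which both satisfies $X\delta=1$ and annihilates holomorphic $h$; the paper instead uses the real normal $N=\sum_j \delta_{x_j}\partial_{x_j}$, which forces it to deal with the unwanted term $N(h)$ by the identity $N(h)=iT(h)$ for a tangential real field $T$, followed by a further integration by parts with $T$. Your complexified normal packages these two steps into one.

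Second, to kill the boundary integral on $\partial\Omega_\varepsilon$ you invoke the coarea formula and extract a subsequence $\varepsilon_j\downarrow 0$ along which $\varepsilon_j^{\alpha+1}\!\int_{\partial\Omega_{\varepsilon_j}}|h|\,dS\to 0$; since both interior integrals converge absolutely this suffices. The paper instead applies Stokes a second time to rewrite the surface integral over $\partial\Omega_\varepsilon$ as a volume integral over $\Omega_\varepsilon$, and then bounds each resulting piece (again using the $N\to iT$ conversion for the $N(h)$ term), which shows the boundary term vanishes for every $\varepsilon\to 0$ rather than just a subsequence. The paper's route gives slightly more, but your subsequence argument is shorter and perfectly adequate for the stated conclusion.

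One cosmetic point: writing $g_\ell\in C_c^\infty(\operatorname{supp}\chi)$ is a mild abuse since $\operatorname{supp}\chi$ reaches $\partial\Omega$; what you need (and have) is $g_\ell\in C^\infty(\overline{\Omega})$ with support contained in the collar where $X$ is smooth, so the iteration goes through.
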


\begin{remark}\normalfont
We always assume that $\delta$ is positive on $\Omega$.
\end{remark}

\begin{proof}
We employ the method used in \cite[Theorem 1.2]{Herb13}. The idea
is as follows: by acting on the normal direction and using integration
by parts, one can gain some additional powers of $\delta$, but has
to transfer some derivatives in the normal direction on $h$. This
however can be resolved via the holomorphic property of $h$.

Choose a neighborhood $U$ of $\partial\Omega$ such that: $\delta=\delta_{\Omega}$
on $U\cap\Omega$, and the vector field 
\[
N:=\sum_{j=1}^{2n}\frac{\partial\delta_{\Omega}}{\partial x_{j}}\frac{\partial}{\partial x_{j}}
\]
is a smooth vector field on $U\cap\Omega$ satisfying $N\left(\delta_{\Omega}\right)=1$
on $U\cap\Omega$. Here $\left(z_{1},\ldots,z_{n}\right)=\left(x_{1},x_{2},\ldots,x_{2n-1},x_{2n}\right)$
are the standard coordinates of $\mathbb{C}^{n}$. Such a neighborhood exists since $\partial\Omega\in C^{\infty}$. Choose a cut-off
function $\Theta\in C_{0}^{\infty}\left(U\right)$ such that $\Theta=1$
on some neighborhood $U'\Subset U$ of $\partial\Omega$. 

First, we
claim that for given $\alpha > -1$ and $g\in C^{\infty}\left(\overline{\Omega}\right)$,
there is $g_{1}\in C^{\infty}\left(\overline{\Omega}\right)$ such
that 
\begin{equation}
\int_{\Omega}hg\delta^{\alpha}=\int_{\Omega}hg_{1}\delta^{\alpha+1},\label{eq:gh1}
\end{equation}
for any holomorphic function $h$ on $\Omega$ satisfying $h\delta^{\alpha}\in L^{1}\left(\Omega\right)$.
For $\varepsilon>0$ small, let $\Omega_{\varepsilon}:=\left\{ z\in\Omega:\delta_{\Omega}\left(z\right)>\varepsilon\right\} $.
Then 
\begin{align*}
\int_{\Omega_{\varepsilon}}\delta^{\alpha}hg & =\int_{\Omega_{\varepsilon}}\Theta\delta^{\alpha}hg+\int_{\Omega_{\varepsilon}}\left(1-\Theta\right)\delta^{\alpha}hg\\
 & =\frac{1}{\alpha+1}\underbrace{\int_{\Omega_{\varepsilon}}N\left(\delta^{\alpha+1}\right)\Theta hg}_{I_{1}}+\underbrace{\int_{\Omega_{\varepsilon}}h\left(\frac{1-\Theta}{\delta}g\right)\delta^{\alpha+1}}_{Y_{1}}.
\end{align*}
Since the integrand of $Y_{1}$ is in $L^{1}\left(\Omega\right)$,
\[
Y_{1}\xrightarrow{\varepsilon\rightarrow0}\int_{\Omega}h\left(\frac{1-\Theta}{\delta}g\right)\delta^{\alpha+1},
\]
which is in the form of the RHS of \eqref{eq:gh1}. For $I_{1}$:
\begin{align*}
I_{1} & =\int_{\Omega_{\varepsilon}}N\left(\delta^{\alpha+1}\right)\Theta hg=\int_{\Omega_{\varepsilon}}N\left(\delta^{\alpha+1}\Theta hg\right)-\int_{\Omega_{\varepsilon}}\delta^{\alpha+1}N\left(\Theta hg\right)\\
 & =\underbrace{\int_{\Omega_{\varepsilon}}\sum_{j=1}^{2n}\frac{\partial}{\partial x_{j}}\left(\delta_{x_{j}}\delta^{\alpha+1}\Theta hg\right)}_{I_{2}}-\underbrace{\int_{\Omega_{\varepsilon}}\left(\delta^{\alpha+1}\Theta hg\right)\triangle\delta}_{Y_{2}}-\underbrace{\int_{\Omega_{\varepsilon}}\delta^{\alpha+1}N\left(\Theta hg\right)}_{I_{3}}.
\end{align*}
The terms $I_{2}$ and $I_{3}$ can be handled quite similarly. The
key is to transform the action $N\left(h\right)$ into $iT\left(h\right)$,
where $T$ is the tangential vector field defined by
\[
T=\sum_{j=1}^{n}\left(\frac{\partial\delta}{\partial x_{2j}}\,\frac{\partial}{\partial x_{2j-1}}-\frac{\partial\delta}{\partial x_{2j-1}}\,\frac{\partial}{\partial x_{2j}}\right).
\]
One can check that $N\left(h\right)=iT\left(h\right)$, for any $h\in\mathcal{O}\left(\Omega\right)$.
Also, by Stokes' theorem, $\int_{\Omega_{\varepsilon}}T\left(g\right)=0,$
for any $g\in C^{1}\left(\overline{\Omega}_{\varepsilon}\right)$. Thus
\begin{align*}
I_{2} & =\intop_{\Omega_{\varepsilon}}d\left(\delta^{\alpha+1}\Theta hg\sum_{j=1}^{2n}\left(-1\right)^{j+1}\delta_{x_{j}}d\hat{x}_{j}\right)=\varepsilon^{\alpha+1}\intop_{\partial\Omega_{\varepsilon}}\Theta hg\sum_{j=1}^{2n}\left(-1\right)^{j+1}\delta_{x_{j}}d\hat{x}_{j}\\
 & =\varepsilon^{\alpha+1}\intop_{\Omega_{\varepsilon}}d\left(\Theta hg\sum_{j=1}^{2n}\left(-1\right)^{j+1}\delta_{x_{j}}d\hat{x}_{j}\right)\\
 & =\varepsilon^{\alpha+1}\left(\;\intop_{\Omega_{\varepsilon}}N\left(\Theta g\right)h+\intop_{\Omega_{\varepsilon}}N\left(h\right)\Theta g+\intop_{\Omega_{\varepsilon}}\Theta hg\triangle\delta\right).
\end{align*}
One has
\[
\varepsilon^{\alpha+1}\left|\int_{\Omega_{\varepsilon}}N\left(\Theta g\right)h\right|\leq\varepsilon^{\min\left\{ 1,1+\alpha\right\}}\int_{\Omega_{\varepsilon}}\left|N\left(\Theta g\right)\right|\left|h\delta^{\alpha}\right|\xrightarrow{\varepsilon\rightarrow0}0,
\]
since $\delta^{-\alpha}\leq\text{const}_{\Omega}\,\left(1+\varepsilon^{-\alpha}\right)$
on $\Omega_{\varepsilon}$,
 $\Theta g\in C^{\infty}\left(\overline{\Omega}\right)$ and
$h\delta^{\alpha}\in L^{1}\left(\Omega\right)$. Similarly, 
\[
\varepsilon^{\alpha+1}\int_{\Omega_{\varepsilon}}\Theta hg\triangle\delta\xrightarrow{\varepsilon\rightarrow0}0.
\]
On the other hand 
\begin{align*}
\varepsilon^{\alpha+1}\intop_{\Omega_{\varepsilon}}N\left(h\right)\Theta g & =\varepsilon^{\alpha+1}\intop_{\Omega_{\varepsilon}}iT\left(h\right)\Theta g=i\varepsilon^{\alpha+1}\left(\;\intop_{\Omega_{\varepsilon}}T\left(h\Theta g\right)-\intop_{\Omega_{\varepsilon}}hT\left(\Theta g\right)\right)\\
 & =-i\varepsilon^{\alpha+1}\left(\intop_{\Omega_{\varepsilon}}hT\left(\Theta g\right)\right)\rightarrow0\text{ as }\varepsilon\rightarrow0.
\end{align*}
Thus $I_{2}$ converges to $0$ as $\varepsilon\rightarrow0$. For
$I_{3}$, 
\begin{align*}
I_{3} & =\int_{\Omega_{\varepsilon}}\delta^{\alpha+1}N\left(h\right)\Theta g+\int_{\Omega_{\varepsilon}}\delta^{\alpha+1}N\left(\Theta g\right)h\\
 & =\int_{\Omega_{\varepsilon}}i\delta^{\alpha+1}T\left(h\right)\Theta g+\int_{\Omega_{\varepsilon}}\delta^{\alpha+1}N\left(\Theta g\right)h.
\end{align*}
Since $T\left(\delta\right)=0$, we get that $\delta^{\alpha+1}T\left(h\right)\Theta g=T\left(\delta^{\alpha+1}\Theta gh\right)-\delta^{\alpha+1}hT\left(\Theta g\right)$.
Therefore
\begin{align*}
I_{3} & =\int_{\Omega_{\varepsilon}}-i\delta^{\alpha+1}hT\left(\Theta g\right)+\int_{\Omega_{\varepsilon}}\delta^{\alpha+1}N\left(\Theta g\right)h\\
 & =\underbrace{\int_{\Omega_{\varepsilon}}\delta^{\alpha+1}\left(-iT\left(\Theta g\right)+N\left(\Theta g\right)\right)h}_{Y_{3}}.
\end{align*}
The desired claim now follows since
\[
\int_{\Omega_{\varepsilon}}\delta^{\alpha}hg=\frac{1}{\alpha+1}\left(I_{2}-Y_{2}-Y_{3}\right)+Y_{1},
\]
and $Y_{1}$, $Y_{2}$, $Y_{3}$ converge to integrals in the form
of the RHS of \eqref{eq:gh1}. To complete the proof of this lemma,
we simply repeat the above argument, that is
\[
\int_{\Omega}hg\delta^{\alpha}=\int_{\Omega}hg_{1}\delta^{\alpha+1},
\]
and 
\begin{align*}
\int_{\Omega_{\varepsilon}}hg_{1}\delta^{\alpha+1} & =\int_{\Omega_{\varepsilon}}\Theta\delta^{\alpha+1}hg_{1}+\int_{\Omega_{\varepsilon}}\frac{1-\Theta}{\delta}\delta^{\alpha+2}hg_{1}\\
 & =\frac{1}{\alpha+2}\int_{\Omega_{\varepsilon}}N\left(\delta^{\alpha+2}\right)\Theta hg_{1}+\int_{\Omega_{\varepsilon}}\frac{1-\Theta}{\delta}\delta^{\alpha+2}hg_{1}.
\end{align*}
Note that $h\delta^{\alpha+1}\in L^{1}\left(\Omega\right)$. Then
one can find $g_{2}\in C^{\infty}\left(\overline{\Omega}\right)$
so that
\[
\int_{\Omega}hg_{1}\delta^{\alpha+1}=\int_{\Omega}hg_{2}\delta^{\alpha+2}.
\]
So the process can be continued.

\end{proof}

\begin{proof}[Proof of Theorem \ref{thm1}]
Take any $\overline{f}\in\mathcal{O}\left(\Omega\right)\cap L^{2}\left(\Omega\right)$.
Apply $\psi=-\frac{t}{\eta}\log\left(-\rho\right)$ and $\varphi=0$
in Theorem \ref{BeCh1}, we get that
\begin{equation}
\int_{\Omega}\left|P_{\Omega}\left(v\right)\right|^{2}\left(-\rho\right)^{-\frac{t}{\eta}}\leq\frac{\eta}{\eta-t}\int_{\Omega}\left|v\right|^{2}\left(-\rho\right)^{-\frac{t}{\eta}},\label{eq:mn2}
\end{equation}
for any $v\in L^{2}\left(\Omega\right)$. By duality, we also have
\begin{equation}
\int_{\Omega}\left|P_{\Omega}\left(v\right)\right|^{2}\left(-\rho\right)^{\frac{t}{\eta}}\leq\frac{\eta}{\eta-t}\int_{\Omega}\left|v\right|^{2}\left(-\rho\right)^{\frac{t}{\eta}}.\label{eq:mn3}
\end{equation}
Take any $\phi\in C_{0}^{\infty}\left(\Omega\right)$ so that $\left\Vert \phi\right\Vert _{L^{2}\left(\Omega\right)}=1$.
Let $\delta\in C^{\infty}\left(\overline{\Omega}\right)$ be a function
that equals $\delta_{\Omega}$ near $\partial\Omega$. Since $\delta^{-\left.t\right/2}\phi\in L^{2}\left(\Omega\right)$,
by the self-adjointness of $P_{\Omega}$
\begin{equation}\label{combi}
\left|\int_{\Omega}P_{\Omega}\left(fg\right)\delta^{-\frac{t}{2}}\overline{\phi}\right|=\left|\int_{\Omega}fg\overline{P_{\Omega}\left(\delta^{-\frac{t}{2}}\phi\right)}\right|=\left|\int_{\Omega}\overline{f}P_{\Omega}\left(\delta^{-\frac{t}{2}}\phi\right)\overline{g}\right|.
\end{equation}
Since $\overline{f}P_{\Omega}\left(\delta^{-\left.t\right/2}\phi\right)$
is holomorphic and in $L^{1}\left(\Omega\right)$, by Theorem \ref{lem2}, for
any (fixed) $m\in\mathbb{Z}^{+}$, there is $\omega_{m,g}\in L^{\infty}\left(\Omega\right)$
such that
\begin{equation}
\int_{\Omega}\overline{f}P_{\Omega}\left(\delta^{-\frac{t}{2}}\phi\right)\overline{g}=\int_{\Omega}\overline{f}P_{\Omega}\left(\delta^{-\frac{t}{2}}\phi\right)\omega_{m,g}\delta^{m}.\label{eq:mn7}
\end{equation}
By \eqref{eq:mn1},
\[
\left|P_{\Omega}\left(\delta^{-\frac{t}{2}}\phi\right)\left(z\right)\right|^{2}\leq\text{const}_{n}\,K_{\Omega}\left(z,z\right)\int_{\left\{ g_{\Omega}\left(\cdot,z\right)<-1\right\} }\left|P_{\Omega}\left(\delta^{-\frac{t}{2}}\phi\right)\right|^{2}.
\]
One has  $K_{\Omega}\left(z,z\right)\leq\text{const}_{\Omega}\,\delta^{-n-1}\left(z\right)$,
$z\in\Omega$. By Hopf lemma (see \cite[Proposition 12.2]{FoSte87}), $-\rho\geq\text{const}_{\Omega}\,\delta$
on $\Omega$. Also, from Theorem \ref{thm:mn2}, we get that $\delta\geq\text{const}_{\Omega}\;\delta\left(z\right)\left|\log\delta\left(z\right)\right|^{-\frac{1}{c}}$ on $\left\{ g_{\Omega}\left(\cdot,z\right)<-1\right\} $, for some positive constant $c$ and for any
$z\in\Omega$ with $\delta\left(z\right)<\delta_{0}$.
Therefore
\[
\left|P_{\Omega}\left(\delta^{-\frac{t}{2}}\phi\right)\left(z\right)\right|^{2}\leq\text{const}_{\Omega}\,\delta^{-n-1-\frac{t}{\eta}}\left(z\right)\left|\log\delta\left(z\right)\right|^{\frac{t}{c\eta}}\int_{\Omega}\left|P_{\Omega}\left(\delta^{-\frac{t}{2}}\phi\right)\right|^{2}\left(-\rho\right)^{\frac{t}{\eta}}.
\]
By using \eqref{eq:mn3},
\[
\int_{\Omega}\left|P_{\Omega}\left(\delta^{-\frac{t}{2}}\phi\right)\right|^{2}\left(-\rho\right)^{\frac{t}{\eta}}\leq\frac{\eta}{\eta-t}\int_{\Omega}\left|\phi\right|^{2}\delta^{-t}\left(-\rho\right)^{\frac{t}{\eta}}\leq\text{const}_{\Omega,t},
\]
since $-\rho\leq\text{const}_{\Omega}\delta^{\eta}$ and $\left\Vert \phi\right\Vert _{L^{2}\left(\Omega\right)}=1$.
So we obtain that
\begin{equation}
\left|P_{\Omega}\left(\delta^{-\frac{t}{2}}\phi\right)\left(z\right)\right|^{2}\leq\text{const}_{\Omega,t}\,\delta^{-n-1-\frac{t}{\eta}}\left(z\right)\left|\log\delta\left(z\right)\right|^{\frac{t}{c\eta}},\forall z\in\Omega\text{ with }\delta\left(z\right)<\delta_{0}.\label{eq:mn5}
\end{equation}
By the sub-mean inequality, \eqref{eq:mn5} is also
true if $z$ is away from $\partial\Omega$. Indeed, if $\delta_{\Omega}\left(z\right)\geq\delta_{0}$ then
\begin{align*}
\left|P_{\Omega}\left(\delta^{-\frac{t}{2}}\phi\right)\left(z\right)\right|^{2} & \leq\left|B\left(z,\frac{\delta_{0}}{2}\right)\right|^{-1}\int_{B\left(z,\frac{\delta_{0}}{2}\right)}\left|P_{\Omega}\left(\delta^{-\frac{t}{2}}\phi\right)\right|^{2}\\
 & \leq\text{const}_{\Omega,t}\;\int_{B\left(z,\frac{\delta_{0}}{2}\right)}\left|P_{\Omega}\left(\delta^{-\frac{t}{2}}\phi\right)\right|^{2}\left(-\rho\right)^{\frac{t}{\eta}}\\
 & \leq\text{const}_{\Omega,t}.
\end{align*}
From \eqref{combi} and \eqref{eq:mn7}, it
continues as
\[
\left|\int_{\Omega}P_{\Omega}\left(fg\right)\delta^{-\frac{t}{2}}\overline{\phi}\right|\leq\text{const}_{\Omega,t}\int_{\Omega}\left|f\right|\left|\omega_{m,g}\right|\delta^{m-\frac{1}{2}\left(n+1+\frac{t}{\eta}\right)}\left|\log\delta\right|^{\frac{t}{2c\eta}}.
\]
By choosing $m>k+\frac{1}{2}\left(n+1+\left.t\right/\eta\right)$,
we arrive at
\begin{equation}
\left|\int_{\Omega}P_{\Omega}\left(fg\right)\delta^{-\frac{t}{2}}\overline{\phi}\right|\leq\text{const}_{\Omega,g,k,t}\int_{\Omega}\left|f\right|\delta^{k}\leq\text{const}_{\Omega,g,k,t}\int_{\Omega}\left|f\right|\delta_{\Omega}^{k},\label{eq:mn8}
\end{equation}
for any $\phi\in C_{0}^{\infty}\left(\Omega\right)$ so that $\left\Vert \phi\right\Vert _{L^{2}\left(\Omega\right)}=1$.
By an elementary fact given below, from this we get that $P_{\Omega}\left(fg\right)\delta^{-\left.t\right/2}\in L^{2}\left(\Omega\right)$
and 
\[
\left\Vert P_{\Omega}\left(fg\right)\right\Vert _{H^{\left.t\right/2}\left(\Omega\right)}\leq\left(\int_{\Omega}\left|P_{\Omega}\left(fg\right)\right|^{2}\delta^{-t}\right)^{\frac{1}{2}}\leq\text{const}_{\Omega,g,k,t}\int_{\Omega}\left|f\right|\delta_{\Omega}^{k}.
\]
This completes the proof of Theorem \ref{thm1}.

\emph{Fact}: \emph{Let $\varphi$ be a locally integrable function on $\Omega$
such that $\left|\int_{\Omega}\varphi\phi\right|\leq C$, for any
$\phi\in C_{0}^{\infty}\left(\Omega\right)$ with $\left\Vert \phi\right\Vert _{L^{2}\left(\Omega\right)}=1$.
Then $\varphi\in L^{2}\left(\Omega\right)$ and $\left\Vert \varphi\right\Vert _{L^{2}\left(\Omega\right)}\leq C$.}

\emph{Justify the fact}: by the hypothesis, the operator $\phi\rightarrow\int_{\Omega}\varphi\phi$
can be extended boundedly on $L^{2}\left(\Omega\right)$. By Riesz's theorem, there is $\widetilde{\varphi}\in L^{2}\left(\Omega\right)$
such that $\int_{\Omega}\left(\varphi-\widetilde{\varphi}\right)\phi=0$,
for any $\phi\in C_{0}^{\infty}\left(\Omega\right)$. Thus $\varphi=\widetilde{\varphi}$.

\end{proof}

If one extends the domain of definition of $P_{\Omega}$ to the class
\[
\mathcal{F}=\left\{ f\text{ is Lebesgue measurable on }\Omega:f\left(\cdot\right)K\left(z,\cdot\right)\in L^{1}\left(\Omega\right),\text{ a.e. }z\in\Omega\right\} ,
\]
so that now
\[
P_{\Omega}\left(f\right)\left(\cdot\right)=\intop_{\Omega}K\left(\cdot,w\right)f\left(w\right)dV_{w},\quad\quad\text{(for }f\in\mathcal{F}),
\]
is well-defined as a measurable function on $\Omega$, then we obtain
the following estimate:

\begin{proposition}\label{impro}
Let $\Omega$ be a smoothly bounded pseudoconvex domain in $\mathbb{C}^{n}$. Let $\left.-\alpha\left(\Omega\right)\right/2<s_{1}<s_{2}<\left.\alpha\left(\Omega\right)\right/2$. Then for every function $g\in C^{\infty}\left(\overline{\Omega}\right)$ and
any $k\in\mathbb{Z}^{+}$, 
there is a positive constant $C$ such that
\[
\left\Vert P_{\Omega}\left(fg\right)\right\Vert _{H^{s_{2}}\left(\Omega\right)}\leq C \intop_{\Omega}\left|f\right|\delta_{\Omega}^{k},
\]
for all conjugate holomorphic functions $f\in H^{s_{1}}\left(\Omega\right)$.

\end{proposition}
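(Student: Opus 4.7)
The plan is to mimic the proof of Theorem \ref{thm1} essentially verbatim, interpreting $P_{\Omega}(fg)$ via the integral kernel $K_{\Omega}$ as in the extended definition preceding the statement. First I would choose $t$ with $\max(2s_{2},-2s_{1})<t<\alpha(\Omega)$; such $t$ exists because $s_{1},s_{2}\in(-\alpha(\Omega)/2,\alpha(\Omega)/2)$ forces both $2s_{2}$ and $-2s_{1}$ to lie strictly below $\alpha(\Omega)$. As $s_{2}<t/2$, it then suffices to prove $\|P_{\Omega}(fg)\|_{H^{t/2}(\Omega)}\leq C\int_{\Omega}|f|\delta_{\Omega}^{k}$.

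For each $\phi\in C_{0}^{\infty}(\Omega)$ with $\|\phi\|_{L^{2}}=1$, the first goal is the duality identity
\begin{equation*}
\int_{\Omega}P_{\Omega}(fg)\,\delta^{-t/2}\overline{\phi}\;=\;\int_{\Omega}\overline{f}\,\overline{g}\,P_{\Omega}(\delta^{-t/2}\phi),
\end{equation*}
which comes from Fubini together with the symmetry $K_{\Omega}(z,w)=\overline{K_{\Omega}(w,z)}$. Note that $P_{\Omega}(\delta^{-t/2}\phi)\in A^{2}(\Omega)$ in the classical sense, since $\delta^{-t/2}\phi\in L^{2}(\Omega)$ has compact support. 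Once this identity is established the argument of Theorem \ref{thm1} transfers without change: I apply Theorem \ref{lem2} to the holomorphic function $\overline{f}\,P_{\Omega}(\delta^{-t/2}\phi)$ to rewrite the right-hand side as $\int_{\Omega}\overline{f}\,\omega_{m,g}\,P_{\Omega}(\delta^{-t/2}\phi)\,\delta^{m}$ for any $m\in\mathbb{Z}^{+}$, insert the pointwise bound
\begin{equation*}
|P_{\Omega}(\delta^{-t/2}\phi)(w)|\leq C\,\delta(w)^{-(n+1+t/\eta)/2}|\log\delta(w)|^{t/(2c\eta)}
\end{equation*}
from the proof of Theorem \ref{thm1} (with $t<\eta<\alpha(\Omega)$), and choose $m$ large enough to absorb the negative power and the logarithmic factor of $\delta$. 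This yields $|\int_{\Omega}P_{\Omega}(fg)\delta^{-t/2}\overline{\phi}|\leq C\int_{\Omega}|f|\delta_{\Omega}^{k}$, whereupon the Riesz-duality Fact at the end of the proof of Theorem \ref{thm1} converts it into $\|P_{\Omega}(fg)\|_{H^{t/2}(\Omega)}\leq C\int_{\Omega}|f|\delta_{\Omega}^{k}$, finishing the proof.

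The hard part will be justifying Fubini and, equivalently, the $L^{1}$-integrability of the double integral, when $f\in H^{s_{1}}(\Omega)$ with $s_{1}<0$ is only a distribution a priori. My plan here is to identify the conjugate-holomorphic subspace of $H^{s_{1}}(\Omega)$ with a weighted Bergman space by using the dual of the estimate $\int|P_{\Omega}(v)|^{2}(-\rho)^{-t/\eta}\leq\frac{\eta}{\eta-t}\int|v|^{2}(-\rho)^{-t/\eta}$ derived in the proof of Theorem \ref{thm1}: this gives a bounded $P_{\Omega}:L^{2}(\Omega,(-\rho)^{t/\eta})\to L^{2}(\Omega,(-\rho)^{t/\eta})$, and since $t>-2s_{1}$ one obtains $f\in L^{2}(\Omega,\delta^{-2s_{1}})$. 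The sub-mean-value inequality on balls $B(w,c\,\delta(w))\subset\Omega$ then yields a pointwise bound of the form $|f(w)|\leq C\,\delta(w)^{s_{1}-n}\|f\|_{L^{2}(\Omega,\delta^{-2s_{1}})}$. Combined with the pointwise bound for $P_{\Omega}(\delta^{-t/2}\phi)$ displayed above, absolute integrability of $\overline{f}\,P_{\Omega}(\delta^{-t/2}\phi)\,\delta^{m}$ holds as soon as $m$ is taken larger than $n+(n+1+t/\eta)/2-s_{1}$, which is compatible with the size of $m$ required in the final step. As an alternative route I would try approximating $f$ by smooth conjugate-holomorphic functions $f_{j}\in C^{\infty}(\overline{\Omega})$ converging to $f$ in $H^{s_{1}}$, applying Theorem \ref{thm1} to each $f_{j}$, and passing to the limit via weak-$\ast$ compactness in $H^{t/2}(\Omega)$ together with continuity of the extended $P_{\Omega}$.
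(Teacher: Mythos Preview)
Your alternative route at the end---approximate $f$ and pass to the limit---is precisely the paper's approach. The paper picks $c\in(\max\{-s_{1},s_{2}\},\alpha(\Omega)/2)$ and $\rho\in PSH^{-}(\Omega)$ with $-\rho\leq\text{const}\,\delta_{\Omega}^{2c}$, invokes the density Fact from the proof of Theorem~\ref{thm2} to choose $\overline{f_{j}}\in A^{2}(\Omega)$ converging to $\overline{f}$ in $L^{2}(\Omega,\delta_{\Omega}^{-2s_{1}})$, applies Theorem~\ref{thm1} to each $f_{j}$, and then shows $P_{\Omega}(f_{j}g)\to P_{\Omega}(fg)$ uniformly on compacta via Cauchy--Schwarz against the weighted kernel bound $\int_{\Omega}|K_{\Omega}(z,\cdot)|^{2}(-\rho)^{s_{1}/c}\leq C\,K_{\widetilde{W}}(z,z)$; Fatou and Weierstrass close the argument. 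Working in $L^{2}(\delta_{\Omega}^{-2s_{1}})$ rather than in $H^{s_{1}}$ is exactly what makes the convergence $\int|f_{j}|\delta_{\Omega}^{k}\to\int|f|\delta_{\Omega}^{k}$ transparent, a point your sketch leaves vague.

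Your main direct route has a genuine gap at the invocation of Theorem~\ref{lem2}. Applied with $\alpha=0$, that theorem requires the holomorphic function $h=\overline{f}\,P_{\Omega}(\delta^{-t/2}\phi)$ itself to lie in $L^{1}(\Omega)$; you instead verify $h\delta^{m}\in L^{1}$ for large $m$, which is strictly weaker and is \emph{not} the stated hypothesis. The gap is repairable: once one knows $f\in L^{2}(\Omega,\delta^{-2s_{1}})$, one combines this with $P_{\Omega}(\delta^{-t/2}\phi)\in L^{2}(\Omega,\delta^{-t})\subset L^{2}(\Omega,\delta^{2s_{1}})$ (the first membership from \eqref{eq:mn2} using that $\phi$ has compact support, the second from $t>-2s_{1}$) and Cauchy--Schwarz gives $h\in L^{1}$ directly---your pointwise bounds are far too crude for this. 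Separately, your claim that boundedness of $P_{\Omega}$ on $L^{2}(\Omega,(-\rho)^{t/\eta})$ yields $f\in L^{2}(\Omega,\delta^{-2s_{1}})$ is a non sequitur; identifying $\overline{\mathcal{O}(\Omega)}\cap H^{s_{1}}$ with a weighted $L^{2}$ space is an independent fact (one that the paper, too, uses without comment).
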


\begin{proof}

Without loss of generality, we may assume that $s_{1}<0<s_{2}$. Then, there are $c\in\left(\max\left\{ -s_{1},s_{2}\right\} ,\left.\alpha\left(\Omega\right)\right/2\right)$
and $\rho\in PSH^{-}\left(\Omega\right)$ such that $-\rho\leq \text{const}\,\delta_{\Omega}^{2c}$
on $\Omega$. Since $A^{2}\left(\Omega\right)=\mathcal{O}\left(\Omega\right)\cap L^{2}\left(\Omega\right)$
is dense in $\mathcal{O}\left(\Omega\right)\cap L^{2}\left(\Omega,\delta_{\Omega}^{-2s_{1}}\right)$
(see the Fact in the proof of Theorem \ref{thm2}), we can choose
a sequence $\left\{ \overline{f_{j}}\right\} $ in $A^{2}\left(\Omega\right)$
that converges to $\overline{f}$ in $L^{2}\left(\Omega,\delta_{\Omega}^{-2s_{1}}\right)$.
It follows that \[
\intop_{\Omega}\left|f_{j}\right|\delta_{\Omega}^{k}\rightarrow\intop_{\Omega}\left|f\right|\delta_{\Omega}^{k},\text{ as }j\rightarrow\infty.
\]
From Theorem \ref{thm1}, there is a positive constant $C$ such that
\[
\left\Vert P_{\Omega}\left(f_{j}g\right)\right\Vert _{H^{s_{2}}\left(\Omega\right)}\leq C \intop_{\Omega}\left|f_{j}\right|\delta_{\Omega}^{k},\;\forall j.
\]
By Fatou's lemma and Weierstrass's theorem, it suffices to
show that $P_{\Omega}\left(f_{j}g\right)$ converges uniformly to
$P_{\Omega}\left(fg\right)$ on any relatively compact subset $W$ of $\Omega$. Take $\widetilde{W}$ so that $W\Subset\widetilde{W}\Subset\Omega$,
then $K_{\widetilde{W}}\left(z,z\right)$ is bounded for $z\in W$. Thus
\begin{align*}\left|\int_{\Omega}K_{\Omega}\left(z,w\right)\left(f_{j}-f\right)\left(w\right)g\left(w\right)dV_{w}\right|^{2}\\
 &\hspace*{-2.3cm} \leq\left(\int_{\Omega}\left|K_{\Omega}\left(z,\cdot\right)\right|^{2}\left(-\rho\right)^{\frac{s_{1}}{c}}\right)\left(\int_{\Omega}\left|f_{j}-f\right|^{2}\left|g\right|^{2}\left(-\rho\right)^{-\frac{s_{1}}{c}}\right)\\
 &\hspace*{-2.3cm} \leq\;\text{c}_{\widetilde{W},g} \;K_{\widetilde{W} }\left(z,z\right)\left\Vert f_{j}-f\right\Vert _{L^{2}\left(\Omega,\delta_{\Omega}^{-2s_{1}}\right)}^{2}\\
 &\hspace*{-2.3cm} \rightarrow0,
\end{align*}
uniformly in $z\in W$, as $j\rightarrow\infty$.

\end{proof}

\section{Proof of Theorem \ref{thm2}}

We first verify the following weighted estimate, which follows the
proof of Theorem \ref{BeCh1} very closely. However, here we need
to employ a slightly different limiting argument.

\begin{lemma}\label{lewe}
Let $\Omega$ be a smoothly bounded pseudoconvex domain in $\mathbb{C}^{n}$.
Let $\delta\in C^{\infty}\left(\overline{\Omega}\right)$ be a function
that equals $\delta_{\Omega}$ near $\partial\Omega$. Let $\rho\in PSH^{-}\left(\Omega\right)$
be a function in the form $\rho=-\delta^{\eta}\Psi$, where $0<\eta<1$,
and $\Psi\in C^{\infty}\left(\overline{\Omega}\right)$ such that
$\Psi>0$ on $\overline{\Omega}$.  Let $\beta\geq0$, $\alpha\in\mathbb{R}$ be numbers such that
$\beta^{2}<\alpha+\beta$. Let $\varphi=-\alpha\log\left(-\rho\right)$
and $\psi=-\beta\log\left(-\rho\right)$. Then
\begin{equation}\label{bala_1}
\intop_{\Omega}\left|P_{\varphi}\left(f\right)\right|^{2}\left(1-\frac{\beta^{2}}{\alpha+\beta}\right)e^{\psi-\varphi}\leq\int_{\Omega}\left|P_{\varphi+\psi}\left(e^{\psi}f\right)\right|^{2}e^{-\left(\varphi+\psi\right)},
\end{equation}
for any function $f\in L^{2}\left(\Omega,e^{\psi-\varphi}\right)$.
\end{lemma}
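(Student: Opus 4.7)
The strategy is to follow the twisted $\overline{\partial}$ argument in the proof of Theorem \ref{BeCh1} while stopping one step earlier, thereby retaining the weighted Bergman projection of $e^{\psi} f$ on the right-hand side, and then to pass to the limit on sublevel sets of $-\rho$. Setting $u := -\log(-\rho)$, a direct calculation using $\rho \in PSH^{-}(\Omega)$ gives that $u$ is plurisubharmonic on $\Omega$ and satisfies $i\partial\overline{\partial} u \geq i\partial u \wedge \overline{\partial} u$. Writing $\varphi = \alpha u$ and $\psi = \beta u$, this yields
\[
i\overline{\partial}\psi \wedge \partial\psi \,=\, \beta^{2}\, i\partial u \wedge \overline{\partial} u \,\leq\, \tfrac{\beta^{2}}{\alpha+\beta}\, i\partial\overline{\partial}(\varphi+\psi),
\]
which is the curvature hypothesis of Theorem \ref{BeCh1} with constant $H = \beta^{2}/(\alpha+\beta) < 1$, while $\varphi+\psi = (\alpha+\beta) u$ is plurisubharmonic since $\alpha+\beta > \beta^{2} \geq 0$.

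On a sublevel set $\Omega_{\varepsilon} := \{z \in \Omega : -\rho(z) > \varepsilon\}$ (which is pseudoconvex as a sublevel of a psh function), the weights $\varphi, \psi$ are bounded and smooth up to $\partial\Omega_{\varepsilon}$. Running the twisting computation of Theorem \ref{BeCh1} verbatim on $\Omega_{\varepsilon}$, with $h := e^{-\psi} P_{\Omega_{\varepsilon},\varphi+\psi}(e^{\psi} f)$ and $u_{\varphi} := h - P_{\Omega_{\varepsilon},\varphi}(h)$, gives $\overline{\partial}(u_{\varphi} e^{\psi}) = -P_{\Omega_{\varepsilon},\varphi}(h)\, e^{\psi}\, \overline{\partial}\psi$. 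H\"ormander's $L^{2}$-estimate for the $L^{2}(\Omega_{\varepsilon}, e^{-(\varphi+\psi)})$-minimal solution together with the curvature bound yields $\int |u_{\varphi}|^{2} e^{\psi-\varphi} \leq H \int |P_{\Omega_{\varepsilon},\varphi}(h)|^{2} e^{\psi-\varphi}$. Combining this with the Pythagorean identity $\int |P_{\Omega_{\varepsilon},\varphi}(h)|^{2} e^{\psi-\varphi} = \int |u_{\varphi}|^{2} e^{\psi-\varphi} + \int |h|^{2} e^{\psi-\varphi}$, the identity $P_{\Omega_{\varepsilon},\varphi}(h) = P_{\Omega_{\varepsilon},\varphi}(f)$ (verified exactly as in the proof of Theorem \ref{BeCh1}, using that $e^{\psi}$ is bounded on $\Omega_{\varepsilon}$ so $A^{2}(\Omega_{\varepsilon},e^{-\varphi}) \subseteq A^{2}(\Omega_{\varepsilon},e^{-(\varphi+\psi)})$), and the relation $\int |h|^{2} e^{\psi-\varphi} = \int |P_{\Omega_{\varepsilon},\varphi+\psi}(e^{\psi} f)|^{2} e^{-(\varphi+\psi)}$, we obtain
\[
(1-H)\int_{\Omega_{\varepsilon}} \bigl|P_{\Omega_{\varepsilon},\varphi}(f)\bigr|^{2} e^{\psi-\varphi} \leq \int_{\Omega_{\varepsilon}} \bigl|P_{\Omega_{\varepsilon},\varphi+\psi}(e^{\psi} f)\bigr|^{2} e^{-(\varphi+\psi)}.
\]
The key gain over Theorem \ref{BeCh1} is that we do \emph{not} further bound the right-hand side by $\int |f|^{2} e^{\psi-\varphi}$ via the projection property of $P_{\varphi+\psi}$, which is the concluding step in the proof of that theorem.

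The main obstacle, and the source of the \emph{slightly different} limiting argument, is the passage $\varepsilon \to 0^{+}$, since here both sides of the displayed inequality involve weighted Bergman projections. For the left-hand side, a Cantor diagonal extraction identifies the weak $L^{2}_{\text{loc}}(\Omega)$-limit of $P_{\Omega_{\varepsilon},\varphi}(f)$ as $P_{\Omega,\varphi}(f)$ through the minimization characterization, and weak lower semicontinuity combined with monotone convergence gives $\int_{\Omega}|P_{\Omega,\varphi}(f)|^{2} e^{\psi-\varphi} \leq \liminf_{\varepsilon}\int_{\Omega_{\varepsilon}}|P_{\Omega_{\varepsilon},\varphi}(f)|^{2} e^{\psi-\varphi}$. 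For the right-hand side, the Pythagorean identity
\[
\int_{\Omega_{\varepsilon}}\bigl|P_{\Omega_{\varepsilon},\varphi+\psi}(e^{\psi} f)\bigr|^{2} e^{-(\varphi+\psi)} = \int_{\Omega_{\varepsilon}}|e^{\psi}f|^{2} e^{-(\varphi+\psi)} - d_{\varepsilon}^{2}
\]
holds, where $d_{\varepsilon}^{2}$ is the square of the minimum $L^{2}(\Omega_{\varepsilon}, e^{-(\varphi+\psi)})$-distance from $e^{\psi} f$ to $A^{2}(\Omega_{\varepsilon}, e^{-(\varphi+\psi)})$. Using the restriction of $P_{\Omega,\varphi+\psi}(e^{\psi}f)$ as a test function yields $\limsup_{\varepsilon} d_{\varepsilon}^{2} \leq d_{\Omega}^{2}$, while extracting a weak $L^{2}_{\text{loc}}$-subsequential limit of near-minimizers gives $\liminf_{\varepsilon} d_{\varepsilon}^{2} \geq d_{\Omega}^{2}$, so $d_{\varepsilon}^{2} \to d_{\Omega}^{2}$. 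Combined with $\int_{\Omega_{\varepsilon}}|e^{\psi}f|^{2} e^{-(\varphi+\psi)} \nearrow \int_{\Omega}|f|^{2} e^{\psi-\varphi}$ by monotone convergence, this produces $\lim_{\varepsilon \to 0^{+}}\int_{\Omega_{\varepsilon}}|P_{\Omega_{\varepsilon},\varphi+\psi}(e^{\psi} f)|^{2} e^{-(\varphi+\psi)} = \int_{\Omega}|P_{\Omega,\varphi+\psi}(e^{\psi} f)|^{2} e^{-(\varphi+\psi)}$, and \eqref{bala_1} follows.
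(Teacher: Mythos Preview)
Your argument is correct, but it takes a genuinely different route from the paper's. The paper never exhausts $\Omega$: it works directly on $\Omega$, observes that $h=e^{-\psi}P_{\varphi+\psi}(e^{\psi}f)\in L^{2}(\Omega,e^{-\varphi})$ because $\beta\geq 0$ and $\rho\in L^{\infty}(\Omega)$, applies Theorem~\ref{BeCh1} to $h$ to obtain the inequality outright, and then the only missing piece is the minimality of $u_{\varphi}e^{\psi}$ in $L^{2}(\Omega,e^{-(\varphi+\psi)})$, i.e.\ that $\int_{\Omega}u_{\varphi}\overline{v}\,e^{-\varphi}=0$ for every $v\in A^{2}(\Omega,e^{-(\varphi+\psi)})$. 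This is \emph{not} automatic, since $u_{\varphi}$ is by construction only orthogonal to the smaller space $A^{2}(\Omega,e^{-\varphi})$. The paper's ``slightly different limiting argument'' is precisely a density step: using that $-(\alpha+\beta)\eta<1$ and the Fact proved later in Section~7 (density of $A^{2}(\Omega,\delta_{\Omega}^{\alpha\eta})$ in $A^{2}(\Omega,\delta_{\Omega}^{(\alpha+\beta)\eta})$), one approximates $v$ by elements of $A^{2}(\Omega,e^{-\varphi})$ and passes to the limit via Cauchy--Schwarz, exploiting that $u_{\varphi}\in L^{2}(\Omega,e^{\psi-\varphi})$.

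Your exhaustion by $\Omega_{\varepsilon}=\{-\rho>\varepsilon\}$ sidesteps that density Fact entirely: on $\Omega_{\varepsilon}$ the weights are bounded and the two Bergman spaces coincide as sets, so minimality is trivial. The price you pay is a more delicate limit, since now \emph{both} sides of the inequality involve projections on shrinking domains; your Pythagorean decomposition on the right-hand side together with the two-sided distance estimate $d_{\varepsilon}^{2}\to d_{\Omega}^{2}$ handles this cleanly. In short, the paper trades a domain exhaustion for a function-space density lemma (which itself requires Catlin's result and a $\overline{\partial}$ argument), while your approach is more self-contained but needs the convergence of two projections rather than one.
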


\begin{proof}
Observe that $h=e^{-\psi}P_{\varphi+\psi}\left(e^{\psi}f\right)\in L^{2}\left(\Omega,e^{-\varphi}\right)$,
since $\beta\geq0$ and $\rho\in L^{\infty}\left(\Omega\right)$.
Also $he^{\psi}\in L^{2}\left(\Omega,e^{-\varphi-\psi}\right)$, since
$f\in L^{2}\left(\Omega,e^{\psi-\varphi}\right)$. From Theorem \ref{BeCh1},
\begin{align*}
\int_{\Omega}\left|P_{\varphi}\left(h\right)\right|^{2}\left(1-\frac{\beta^{2}}{\alpha+\beta}\right)e^{\psi-\varphi} & \leq\int_{\Omega}\left|h\right|^{2}e^{\psi-\varphi}\\
 & \leq\int_{\Omega}\left|P_{\varphi+\psi}\left(e^{\psi}f\right)\right|^{2}e^{-\left(\varphi+\psi\right)}\\
 & \leq\int_{\Omega}\left|f\right|^{2}e^{\psi-\varphi}\\
 & <\infty.
\end{align*}
Thus $u_{\varphi}e^{\psi}\in L^{2}\left(\Omega,e^{-\varphi-\psi}\right)$,
where $u_{\varphi}=h-P_{\varphi}\left(h\right)$. To show that $u_{\varphi}e^{\psi}$
is the $L^{2}\left(\Omega,e^{-\varphi-\psi}\right)$-minimal solution
to the dee-bar equation $\overline{\partial}u=\overline{\partial}\left(u_{\varphi}e^{\psi}\right)$,
we need to verify that for any $v\in A^{2}\left(\Omega,e^{-\varphi-\psi}\right)=L^{2}\left(\Omega,e^{-\varphi-\psi}\right)\cap\mathcal{O}\left(\Omega\right)$,
\begin{equation}
\intop_{\Omega}u_{\varphi}\overline{v}e^{-\varphi}=0.\label{eq:lim1}
\end{equation}
From $\beta^{2}<\alpha+\beta$, we get that $\alpha>\left.-1\right/4$.
Thus $-\left(\alpha+\beta\right)\eta\leq-\alpha\eta<1$. Therefore,
by using  the Fact in the proof of Theorem \ref{thm2}, $A^{2}\left(\Omega,e^{-\varphi}\right)=L^{2}\left(\Omega,\delta_{\Omega}^{\alpha\eta}\right)\cap\mathcal{O}\left(\Omega\right)$
is dense in $A^{2}\left(\Omega,e^{-\varphi-\psi}\right)=L^{2}\left(\Omega,\delta_{\Omega}^{\left(\alpha+\beta\right)\eta}\right)\cap\mathcal{O}\left(\Omega\right)$.
So, there is a sequence $\left\{ h_{j}\right\} \in A^{2}\left(\Omega,e^{-\varphi}\right)$
converging to $v$ in $L^{2}\left(\Omega,e^{-\varphi-\psi}\right)$.
It is clear that 
\[
\int_{\Omega}u_{\varphi}\overline{h_{j}}e^{-\varphi}=0,\;\forall j.
\]
On the other hand,
\begin{align*}
\left|\int_{\Omega}u_{\varphi}\left(\overline{h_{j}}-\overline{v}\right)e^{-\varphi}\right| & \leq\left(\int_{\Omega}\left|u_{\varphi}\right|^{2}e^{\psi-\varphi}\right)^{\frac{1}{2}}\left(\int_{\Omega}\left|h_{j}-v\right|^{2}e^{-\left(\varphi+\psi\right)}\right)^{\frac{1}{2}}\\
 & \leq\text{const}\;\left(\int_{\Omega}\left|h_{j}-v\right|^{2}e^{-\left(\varphi+\psi\right)}\right)^{\frac{1}{2}}\\
 & \rightarrow0,\text{ as }j\rightarrow\infty.
\end{align*}
Thus \eqref{eq:lim1} follows. The rest of the argument is the same
as in  the proof of Theorem \ref{BeCh1}. That is, we now have 
\[
\int_{\Omega}\left|u_{\varphi}\right|^{2}e^{\psi-\varphi}\leq\frac{\beta^{2}}{\alpha+\beta}\int_{\Omega}\left|P_{\varphi}\left(h\right)\right|^{2}e^{\psi-\varphi},
\]
and also
\begin{align*}
\int_{\Omega}\left|P_{\varphi}\left(f\right)\right|^{2}e^{\psi-\varphi} & =\int_{\Omega}\left|P_{\varphi}\left(h\right)\right|^{2}e^{\psi-\varphi}\\
 & =\int_{\Omega}\left|u_{\varphi}\right|^{2}e^{\psi-\varphi}+\int_{\Omega}\left|h\right|^{2}e^{\psi-\varphi}\\
 & =\int_{\Omega}\left|u_{\varphi}\right|^{2}e^{\psi-\varphi}+\int_{\Omega}\left|P_{\varphi+\psi}\left(e^{\psi}f\right)\right|^{2}e^{-\left(\varphi+\psi\right)}.
\end{align*}
The desired conclusion then follows.

\end{proof}

\begin{proof}[Proof of Theorem \ref{thm2}]
Our approach is basically similar to the proof of Theorem \ref{thm1}.
An observation is that the estimate \eqref{bala_1} in Lemma \ref{lewe}
is useful to give balanced estimates in weights.

From the ranges of $s_{1}$, $s_{2}$ and $\alpha$, we can choose
$\beta>0$ such that $\beta^{2}<\alpha+\beta$ and 
\[
-\left(\beta+\alpha\right)\frac{\eta}{2}<s_{1}<s_{2}<\left(\beta-\alpha\right)\frac{\eta}{2}.
\]
We now apply $\varphi=-\alpha\log\left(-\rho\right)$ and $\psi=-\beta\log\left(-\rho\right)$
to \eqref{bala_1}. Let's also assume for a moment that $\overline{f}\in L^{2}\left(\Omega,e^{\psi-\varphi}\right)\,\cap\,\mathcal{O}\left(\Omega\right)$. Lemma
\ref{lewe} gives 
\begin{align}
\left\Vert P_{\left(-\rho\right)^{\alpha}}\left(fg\right)\right\Vert _{H^{\left(\beta-\alpha\right)\left(\left.\eta\right/2\right)}\left(\Omega\right)}^{2} & \leq\int_{\Omega}\left|P_{\left(-\rho\right)^{\alpha}}\left(fg\right)\right|^{2}\delta^{-\left(\beta-\alpha\right)\eta}\nonumber \\
 & \leq C_{1}\int_{\Omega}\left|P_{\varphi+\psi}\left(e^{\psi}fg\right)\right|^{2}e^{-\left(\varphi+\psi\right)},\label{eq:mnr2}
\end{align}
where $\left.1\right/C_{1}=\left(1-\left.\beta^{2}\right/\left(\alpha+\beta\right)\right)\inf_{\overline{\Omega}}\Psi^{-\left(\beta-\alpha\right)}$. Take any $\phi\in C_{0}^{\infty}\left(\Omega\right)$ such that $\left\Vert \phi\right\Vert _{L^{2}\left(\Omega\right)}=1$. By duality,
\begin{align*} \left|\int_{\Omega}P_{\varphi+\psi}\left(e^{\psi}fg\right)\overline{\phi}e^{-\left.\left(\varphi+\psi\right)\right/2}\right| & =\left|\int_{\Omega}P_{\varphi+\psi}\left(e^{\psi}fg\right)\overline{\phi}e^{\left.\left(\varphi+\psi\right)\right/2}e^{-\left(\varphi+\psi\right)}\right|\\  & =\left|\int_{\Omega}e^{\psi}fg\overline{P_{\varphi+\psi}\left(\phi e^{\left.\left(\varphi+\psi\right)\right/2}\right)}e^{-\left(\varphi+\psi\right)}\right|\\  & =\left|\int_{\Omega}\overline{f}P_{\varphi+\psi}\left(\phi e^{\left.\left(\varphi+\psi\right)\right/2}\right)\overline{g}e^{-\varphi}\right|, 
\end{align*}
here we have used the fact that $\phi e^{\left.\left(\varphi+\psi\right)\right/2}\in L^{2}\left(\Omega,e^{-\left(\varphi+\psi\right)}\right)$. Notice that $\overline{g}e^{-\varphi}=\delta^{\eta\alpha}\overline{g}\Psi^{\alpha}$, and $\overline{g}\Psi^{\alpha}\in C^{\infty}\left(\overline{\Omega}\right)$. Also $\overline{f}P_{\varphi+\psi}\left(\phi e^{\left.\left(\varphi+\psi\right)\right/2}\right)\in\mathcal{O}\left(\Omega\right)$ and
\begin{align*}\left|\int_{\Omega}\overline{f}P_{\varphi+\psi}\left(\phi e^{\left.\left(\varphi+\psi\right)\right/2}\right)\delta^{\eta\alpha}\right|\\  
& \hspace*{-2.0cm}=\left|\int_{\Omega}\overline{f}e^{\left.\left(\psi-\varphi\right)\right/2}\Psi^{-\alpha}P_{\varphi+\psi}\left(\phi e^{\left.\left(\varphi+\psi\right)\right/2}\right)e^{\left.-\left(\psi+\varphi\right)\right/2}\right|\\  
& \hspace*{-2.0cm}\leq C_{2}\left(\int_{\Omega}\left|f\right|^{2}e^{\psi-\varphi}\right)^{\frac{1}{2}}\left(\int_{\Omega}\left|P_{\varphi+\psi}\left(\phi e^{\left.\left(\varphi+\psi\right)\right/2}\right)\right|^{2}e^{-\left(\psi+\varphi\right)}\right)^{\frac{1}{2}}\\  
& \hspace*{-2.0cm}\leq C_{2}\left(\int_{\Omega}\left|f\right|^{2}e^{\psi-\varphi}\right)^{\frac{1}{2}}, 
\end{align*}
where $C_{2}=\left\Vert \Psi^{-\alpha}\right\Vert _{L^{\infty}\left(\Omega\right)}$. Thus $\overline{f}P_{\varphi+\psi}\left(\phi e^{\left.\left(\varphi+\psi\right)\right/2}\right)\delta^{\eta\alpha}\in L^{1}\left(\Omega\right)$. Now, Theorem \ref{lem2} can be used to give 
\begin{equation} \int_{\Omega}\overline{f}P_{\varphi+\psi}\left(\phi e^{\left.\left(\varphi+\psi\right)\right/2}\right)\overline{g}e^{-\varphi}=\int_{\Omega}\overline{f}P_{\varphi+\psi}\left(\phi e^{\left.\left(\varphi+\psi\right)\right/2}\right)\omega_{m}\delta^{m},\label{eq:mnr3} 
\end{equation} 
where $\omega_{m}\in L^{\infty}\left(\Omega\right)$, for $m\in\mathbb{Z}^{+}$ to be specified later. On the other hand
\begin{align*}\left|P_{\varphi+\psi}\left(\phi e^{\left.\left(\varphi+\psi\right)\right/2}\right)\left(z\right)\right|^{2}\\
 &\hspace*{-2.4cm} \leq\text{const}_{n}\,K\left(z,z\right)\int_{\left\{ g_{\Omega}\left(\cdot,z\right)<-1\right\} }\left|P_{\varphi+\psi}\left(\phi e^{\left.\left(\varphi+\psi\right)\right/2}\right)\right|^{2}\\
 & \hspace*{-2.4cm}\leq\text{c}_{\Omega}\,\frac{\delta^{-n-1-\eta\left(\alpha+\beta\right)}\left(z\right)}{\left|\log\delta\left(z\right)\right|^{\left.-\eta\left(\alpha+\beta\right)\right/c}}\int_{\left\{ g_{\Omega}\left(\cdot,z\right)<-1\right\} }\left|P_{\varphi+\psi}\left(\phi e^{\left.\left(\varphi+\psi\right)\right/2}\right)\right|^{2}e^{-\left(\varphi+\psi\right)}\\
 & \hspace*{-2.4cm}\leq\text{c}_{\Omega}\,\frac{\delta^{-n-1-\eta\left(\alpha+\beta\right)}\left(z\right)}{\left|\log\delta\left(z\right)\right|^{\left.-\eta\left(\alpha+\beta\right)\right/c}},\;\forall z\in\Omega.
\end{align*}
Combining this with \eqref{eq:mnr2}, \eqref{eq:mnr3} and the elementary fact in the proof of Theorem \ref{thm1}, we obtain that  
\begin{equation}
\int_{\Omega}\left|P_{\left(-\rho\right)^{\alpha}}\left(fg\right)\right|^{2}\delta^{-\left(\beta-\alpha\right)\eta}\leq\text{const}\left(\int_{\Omega}\left|f\right|\delta^{k}\right)^{2},\label{eq:mnr4}
\end{equation}
for any conjugate holomorphic function $f\in L^{2}\left(\Omega,e^{\psi-\varphi}\right)$,
provided that $$m>k+\left(\left.1\right/2\right)\left(n+1+\eta\left(\alpha+\beta\right)\right).$$
We now show that \eqref{eq:mnr4} is also true for all $\overline{f}\in H^{s_{1}}\left(\Omega\right)\,\cap\,\mathcal{O}\left(\Omega\right)$.
Fix any $\overline{f}\in H^{s_{1}}\left(\Omega\right)\cap\mathcal{O}\left(\Omega\right)$.
Since $-\eta\left(\beta+\alpha\right)<\eta\left(\beta-\alpha\right)<1$,
by the fact below, there is a sequence $\left\{ \overline{f_{j}}\right\} \in L^{2}\left(\Omega,\delta^{-\eta\left(\beta-\alpha\right)}\right)$
that converges to $\overline{f}$ in $L^{2}\left(\Omega,\delta^{\eta\left(\beta+\alpha\right)}\right)$.
Since
\[
\int_{\Omega}\left|f_{j}\right|\delta^{k}\rightarrow\int_{\Omega}\left|f\right|\delta^{k}\text{ as }j\rightarrow\infty,
\]
and $\overline{f_{j}}\in L^{2}\left(\Omega,e^{\psi-\varphi}\right)\cap\mathcal{O}\left(\Omega\right)$,
thus all we need to do is show that $P_{\left(-\rho\right)^{\alpha}}\left(f_{j}g\right)$
converges uniformly to $P_{\left(-\rho\right)^{\alpha}}\left(fg\right)$
on any subset $W\Subset\Omega$. Take a subset $\widetilde{W}\Subset\Omega$
that contains $W$, by H{\"o}lder's inequality
\begin{align*}
\left|\int_{\Omega}K_{\left(-\rho\right)^{\alpha}}\left(z,w\right)\left(f_{j}-f\right)\left(w\right)g\left(w\right)\left(-\rho\right)^{\alpha}\left(w\right)dV_{w}\right|^{2}\\
 &\hspace*{-7cm} \leq\text{const}\,\left(\int_{\Omega}\left|K_{\left(-\rho\right)^{\alpha}}\left(z,w\right)\right|^{2}\left(-\rho\right)^{-\left(\beta-\alpha\right)}\left(w\right)dV_{w}\right)\left\Vert f_{j}-f\right\Vert _{L^{2}\left(\Omega,\delta^{\eta\left(\beta+\alpha\right)}\right)}^{2}\\
 & \hspace*{-7cm}\leq\text{const}\,K_{\widetilde{W},\left(-\rho\right)^{\alpha}}\left(z,z\right)\left\Vert f_{j}-f\right\Vert _{L^{2}\left(\Omega,\delta^{\eta\left(\beta+\alpha\right)}\right)}^{2}\\
 & \hspace*{-7cm}\rightarrow 0,
\end{align*}
uniformly in $z\in W$, as $j\rightarrow \infty$. Given the fact below, this completes the proof of Theorem \ref{thm2}. 

\vspace*{0.2cm}

\emph{Fact}: \emph{Let $\Omega$ be a smoothly, bounded pseudoconvex domain in $\mathbb{C}^{n}$,
and let $$-\infty<\alpha_{1}<\alpha_{2}<1.$$ Then $A_{\alpha_{2}}^{2}\left(\Omega\right)=\mathcal{O}\left(\Omega\right)\cap L^{2}\left(\Omega,\delta_{\text{\ensuremath{\Omega}}}^{-\alpha_{2}}\right)$
is dense in $A_{\alpha_{1}}^{2}\left(\Omega\right)=\mathcal{O}\left(\Omega\right)\cap L^{2}\left(\Omega,\delta_{\text{\ensuremath{\Omega}}}^{-\alpha_{1}}\right)$.}

\emph{Justify the fact.} 
If $\alpha_{1}>0$ then the claim is proved in \cite[page 4135]{Che14}, which in fact only requires $C^{2}$ smoothness of the boundary. When $\alpha_{1}\leq 0$, it suffices to show that 
$A^{\infty}\left(\Omega\right)=\mathcal{O}\left(\Omega\right)\cap C^{\infty}\left(\overline{\Omega}\right)$
 is dense in $A_{\alpha_{1}}^{2}\left(\Omega\right)$. Since $A^{\infty}\left(\Omega\right)$ is dense in $A^{2}\left(\Omega\right)=\mathcal{O}\left(\Omega\right)\cap L^{2}\left(\Omega\right)$,
see \cite[Theorem 3.1.4]{Cat80}. Thus it suffices to show that $A^{2}\left(\Omega\right)$
is dense in $A_{\alpha_{1}}^{2}\left(\Omega\right)$. Indeed, if this is true then for any $f\in A_{\alpha_{1}}^{2}\left(\Omega\right)$ and $\varepsilon>0$, one can choose $g\in A^{2}\left(\Omega\right)$
and $h\in A^{\infty}\left(\Omega\right)$ such that
\[
\left\Vert f-g\right\Vert _{L^{2}\left(\Omega,\delta_{\Omega}^{-\alpha_{1}}\right)}<\frac{\varepsilon}{2},\text{ and }\left\Vert g-h\right\Vert _{L^{2}\left(\Omega,\delta_{\Omega}^{-\alpha_{1}}\right)}\leq C_{\Omega}\left\Vert g-h\right\Vert _{L^{2}\left(\Omega\right)}<\frac{\varepsilon}{2}.
\]
So $\left\Vert f-h\right\Vert _{L^{2}\left(\Omega,\delta_{\Omega}^{-\alpha_{1}}\right)}<\varepsilon$. The remaining task can also be proved using the dee-bar technique as in 
\cite{Che14}. For convenience, we include it here. 

Fix any $f\in A_{\alpha_{1}}^{2}\left(\Omega\right)$, and choose a function $\rho\in C^{2}\left(\Omega\right)\cap PSH^{-}\left(\Omega\right)$
such that $\left(\left.1\right/C\right)\delta_{\Omega}^{c}\leq-\rho\leq C\delta_{\Omega}^{c}$,
for some positive constants $C$ and $c$. Let $\chi\in C^{\infty}\left(\mathbb{R}\right)$
be such that $\left.\chi\right|_{\left(0,\infty\right)}=0$ and $\left.\chi\right|_{\left(-\infty,-\log2\right)}=1$.
For each $\varepsilon>0$, let $\varphi_{\varepsilon}\in PSH\left(\Omega\right)$
be defined by $\varphi_{\varepsilon}=-\left(\left.-\alpha_{1}\right/c\right)\log\left(-\rho+\varepsilon\right)$.
Apply $L^{2}$-H{\"o}rmander estimate to the dee-bar equation 
\[
\overline{\partial}u=f\overline{\partial}\chi\left(-\log\left(-\rho+\varepsilon\right)+\log2\varepsilon\right),
\]
then we can find a solution $u_{\varepsilon}$ satisfying the estimates
\begin{align*}
\int_{\Omega}\left|u_{\varepsilon}\right|^{2}e^{-\varphi_{\varepsilon}} & \leq\int_{\Omega}\left|f\overline{\partial}\chi\left(-\log\left(-\rho+\varepsilon\right)+\log2\varepsilon\right)\right|_{i\partial\overline{\partial}\varphi_{\varepsilon}}^{2}e^{-\varphi_{\varepsilon}}\\
 & \leq\text{const}\int_{\varepsilon\leq-\rho\leq3\varepsilon}\left|f\right|^{2}\delta_{\Omega}^{-\alpha_{1}}.
\end{align*}
Since $e^{-\varphi_{\varepsilon}}>\varepsilon^{\left.-\alpha_{1}\right/c}$,
thus $u_{\varepsilon}\in L^{2}\left(\Omega\right)$. Also 
\[
\int_{\Omega}\left|f\chi\left(-\log\left(-\rho+\varepsilon\right)+\log2\varepsilon\right)\right|^{2}\leq\text{const}\int_{-\rho\geq\varepsilon}\left|f\right|^{2}\leq\text{const}\,\varepsilon^{\left.\alpha_{1}\right/c}\int_{\Omega}\left|f\right|^{2}\delta_{\Omega}^{-\alpha_{1}}.
\]
Therefore $f_{\varepsilon}:=f\chi\left(-\log\left(-\rho+\varepsilon\right)+\log2\varepsilon\right)-u_{\varepsilon}\in A^{2}\left(\Omega\right)$,
and we also have
\begin{align*}
\int_{\Omega}\left|f_{\varepsilon}-f\right|^{2}\delta_{\Omega}^{-\alpha_{1}} & \leq\text{const}\left(\int_{-\rho\leq3\varepsilon}\left|f\right|^{2}\delta_{\Omega}^{-\alpha_{1}}+\int_{\Omega}\left|u_{\varepsilon}\right|^{2}\delta_{\Omega}^{-\alpha_{1}}\right)\\
 & \leq\text{const}\left(\int_{-\rho\leq3\varepsilon}\left|f\right|^{2}\delta_{\Omega}^{-\alpha_{1}}+\int_{\Omega}\left|u_{\varepsilon}\right|^{2}e^{-\varphi_{\varepsilon}}\right)\\
 & \leq\text{const}\left(\int_{-\rho\leq3\varepsilon}\left|f\right|^{2}\delta_{\Omega}^{-\alpha_{1}}+\int_{\varepsilon\leq-\rho\leq3\varepsilon}\left|f\right|^{2}\delta_{\Omega}^{-\alpha_{1}}\right)\\
 & \rightarrow0\text{ as }\varepsilon\rightarrow0^{+}.
\end{align*}
So we have verified that $A^{2}\left(\Omega\right)$ is dense in $A_{\alpha_{1}}^{2}\left(\Omega\right)$.

\end{proof}

\textbf{Acknowledgement.} The author would like to thank Professor Bo-Yong Chen for his stimulating research on the topic.


\begin{thebibliography}{10}

\bibitem{Bar92}
David~E. Barrett.
\newblock Behavior of the {B}ergman projection on the {D}iederich-{F}orn\ae ss
  worm.
\newblock {\em Acta Math.}, 168(1-2):1--10, 1992.

\bibitem{BeCh00}
Bo~Berndtsson and Philippe Charpentier.
\newblock A {S}obolev mapping property of the {B}ergman kernel.
\newblock {\em Math. Z.}, 235(1):1--10, 2000.

\bibitem{Blo05}
Zbigniew B{\l}ocki.
\newblock The {B}ergman metric and the pluricomplex {G}reen function.
\newblock {\em Trans. Amer. Math. Soc.}, 357(7):2613--2625, 2005.

\bibitem{Blo14}
Zbigniew B{\l}ocki.
\newblock Cauchy-{R}iemann meet {M}onge-{A}mp\`ere.
\newblock {\em Bull. Math. Sci.}, 4(3):433--480, 2014.

\bibitem{Cat80}
David Catlin.
\newblock Boundary behavior of holomorphic functions on pseudoconvex domains.
\newblock {\em J. Differential Geometry}, 15(4):605--625 (1981), 1980.

\bibitem{CDM15}
P.~Charpentier, Y.~Dupain, and M.~Mounkaila.
\newblock On estimates for weighted {B}ergman projections.
\newblock {\em Proc. Amer. Math. Soc.}, 143(12):5337--5352, 2015.

\bibitem{Che14}
Bo-Yong Chen.
\newblock Weighted {B}ergman spaces and the {$\overline\partial$}-equation.
\newblock {\em Trans. Amer. Math. Soc.}, 366(8):4127--4150, 2014.

\bibitem{Che17}
Bo-Yong Chen.
\newblock Bergman kernel and hyperconvexity index.
\newblock {\em Anal. PDE}, 10(6):1429--1454, 2017.

\bibitem{ChZe17}
Liwei Chen and Yunus~E. Zeytuncu.
\newblock Smoothing properties of the {F}riedrichs operator on {$L^p$} spaces.
\newblock {\em Internat. J. Math.}, 29(1):1850004, 16, 2018.

\bibitem{DiFo77}
Klas Diederich and John~Erik Fornaess.
\newblock Pseudoconvex domains: an example with nontrivial {N}ebenh\"ulle.
\newblock {\em Math. Ann.}, 225(3):275--292, 1977.

\bibitem{DoFe83}
Harold Donnelly and Charles Fefferman.
\newblock {$L^{2}$}-cohomology and index theorem for the {B}ergman metric.
\newblock {\em Ann. of Math. (2)}, 118(3):593--618, 1983.

\bibitem{EdMcN17}
L.~D. Edholm and J.~D. McNeal.
\newblock Bergman subspaces and subkernels: degenerate {$L^p$} mapping and
  zeroes.
\newblock {\em J. Geom. Anal.}, 27(4):2658--2683, 2017.

\bibitem{FoSte87}
John~Erik Fornaess and Berit Stens{\o}nes.
\newblock {\em Lectures on counterexamples in several complex variables},
  volume~33 of {\em Mathematical Notes}.
\newblock Princeton University Press, Princeton, NJ; University of Tokyo Press,
  Tokyo, 1987.

\bibitem{Har08}
Phillip~S. Harrington.
\newblock The order of plurisubharmonicity on pseudoconvex domains with
  {L}ipschitz boundaries.
\newblock {\em Math. Res. Lett.}, 15(3):485--490, 2008.

\bibitem{Herb13}
A.-K. Herbig.
\newblock Integrating holomorphic ${L}^{1}$-functions.
\newblock {\em arXiv:1303.5220}, 2013.

\bibitem{HMS14}
A.-K. Herbig, J.~D. McNeal, and E.~J. Straube.
\newblock Duality of holomorphic function spaces and smoothing properties of
  the {B}ergman projection.
\newblock {\em Trans. Amer. Math. Soc.}, 366(2):647--665, 2014.

\bibitem{Her83}
Gregor Herbort.
\newblock Logarithmic growth of the {B}ergman kernel for weakly pseudoconvex
  domains in {${\bf C}^{3}$} of finite type.
\newblock {\em Manuscripta Math.}, 45(1):69--76, 1983.

\bibitem{KrPe08}
Steven~G. Krantz and Marco~M. Peloso.
\newblock Analysis and geometry on worm domains.
\newblock {\em J. Geom. Anal.}, 18(2):478--510, 2008.

\bibitem{PWZ90}
Zbigniew Pasternak-Winiarski.
\newblock On the dependence of the reproducing kernel on the weight of
  integration.
\newblock {\em J. Funct. Anal.}, 94(1):110--134, 1990.

\bibitem{Ran81}
R.~Michael Range.
\newblock A remark on bounded strictly plurisubharmonic exhaustion functions.
\newblock {\em Proc. Amer. Math. Soc.}, 81(2):220--222, 1981.

\bibitem{Thu18}
Phung~Trong Thuc.
\newblock A note on {$L^2$}-boundary integrals of the {B}ergman kernel.
\newblock {\em Internat. J. Math.}, 29(14):1871001, 16, 2018.

\end{thebibliography}
\end{document}